\numberwithin{equation}{section}
\DeclareMathAlphabet{\pazocal}{OMS}{zplm}{m}{n}
\renewcommand\d{\partial}
\newcommand\R{\mathbb R}
\newcommand\C{\mathbb C}
\newcommand\br{\begin{remark}}
\newcommand\er{\end{remark}}
\newcommand\bp{\begin{pmatrix}}
\newcommand\ep{\end{pmatrix}}
\newcommand{\be}{\begin{equation}}
\newcommand{\ee}{\end{equation}}
\newcommand\ba{\begin{equation}\begin{aligned}}
\newcommand\ea{\end{aligned}\end{equation}}
\newcommand{\bqs}{\begin{equation*}}
\newcommand{\eqs}{\end{equation*}}
\renewcommand{\Re}{\mathrm{Re}}
\renewcommand{\Im}{\mathrm{Im}}
\newcommand{\mbi}{\mathbf{i}}
\newcommand{\md}{\mathrm{d}}
\newcommand{\bap}{\begin{app}}
\newcommand{\eap}{\end{app}}
\newcommand{\begs}{\begin{exams}}
\newcommand{\eegs}{\end{exams}}
\newcommand{\beg}{\begin{example}}
\newcommand{\eeg}{\end{exaplem}}
\newcommand{\bpr}{\begin{proposition}}
\newcommand{\epr}{\end{proposition}}
\newcommand{\bt}{\begin{theorem}}
\newcommand{\et}{\end{theorem}}
\newcommand{\bc}{\begin{corollary}}
\newcommand{\ec}{\end{corollary}}
\newcommand{\bl}{\begin{lemma}}
\newcommand{\el}{\end{lemma}}
\newcommand{\bd}{\begin{definition}}
\newcommand{\ed}{\end{definition}}
\newcommand{\brs}{\begin{remarks}}
\newcommand{\ers}{\end{remarks}}
\newcommand{\NN}{{\mathbb N}}
\newcommand{\ZZ}{{\mathbb Z}}
\newtheorem{theorem}{Theorem}[section]
\newtheorem{proposition}[theorem]{Proposition}
\newtheorem{corollary}[theorem]{Corollary}
\newtheorem{lemma}[theorem]{Lemma}
\theoremstyle{remark}
\newtheorem{remark}[theorem]{Remark}
\theoremstyle{definition}
\newtheorem{definition}[theorem]{Definition}
\newtheorem{example}[theorem]{Example}
\newcommand\cA{{\mathcal A}}
\newcommand\cB{{\mathcal B}}
\newcommand\cC{{\mathcal C}}
\newcommand\cL{{\mathcal L}}
\newcommand\cQ{{\mathcal Q}}
\newcommand\cO{{\mathcal O}}
\newcommand\cZ{{\mathcal Z}}
\newcommand{\tpsi}{{\widetilde\psi}}
\newcommand{\beq}{\begin{equation}}
\newcommand{\eeq}{\end{equation}}
\title{Existence and stability of nonmonotone hydraulic shocks for the Saint 
Venant equations of inclined thin-film flow}
\author{Gr\'egory Faye$^\star$}
\address{Institut de Math\'ematiques de Toulouse, UMR5219, Universit\'e de Toulouse, CNRS UPS IMT, F-31062 Toulouse Cedex 9 France}
\email{\tt gregory.faye@math.univ-toulouse.fr}
\thanks{$^\star$Corresponding author}
\thanks{G.F. acknowledges support from the ANR via the project Indyana under grant agreement ANR- 21- CE40-0008 and Labex CIMI under grant agreement ANR-11-LABX-0040.}
\author{L.~Miguel Rodrigues}
\address{
Univ Rennes \& IUF, CNRS, IRMAR - UMR 6625, F-35000 Rennes, France}
\email{{\tt luis-miguel.rodrigues@univ-rennes1.fr}}
\thanks{Research of L.M.R. was partially supported by EPSRC grant no EP/R014604/1.}
\author{Zhao Yang}
\address{Academy of Mathematics and Systems Science, Chinese Academy of Sciences, Beijing
100190 China.}
\email{yangzhao@amss.ac.cn}
\thanks{Research of Z.Y. was partially supported by an IU COAS dissertation year fellowship}
\author{Kevin Zumbrun}
\address{Indiana University, Bloomington, IN 47405}
\email{kzumbrun@indiana.edu}
\thanks{Research of K.Z. was partially supported
under NSF grants no. DMS-1400555 and DMS-1700279}
\begin{document}

\begin{abstract}
Extending work of Yang-Zumbrun for the hydrodynamically stable case of Froude number $F<2$,
we categorize completely the existence and convective stability of hydraulic shock profiles of the Saint Venant equations of inclined thin-film flow. Moreover, we confirm by numerical experiment that asymptotic dynamics for general Riemann data is given in the hydrodynamic instability regime by either stable hydraulic shock waves, or a pattern consisting of an invading roll wave front separated by a finite terminating Lax shock from a constant state at plus infinity. Notably, profiles, and existence and stability diagrams are all rigorously obtained by mathematical analysis and explicit calculation.

\smallskip

\noindent {\it Keywords}: shallow water equations; convective stability; traveling waves; hyperbolic balance laws.

\smallskip

\noindent {\it 2010 MSC}: 35Q35, 35C07, 35B35, 76E15, 35L40, 35L67, 35P15.

\end{abstract}

\maketitle

\tableofcontents

\section{Introduction}\label{s:intro}

In \cite{YZ,SYZ}, there was carried out a comprehensive study of existence and nonlinear stability of hydraulic shock profiles for the Saint Venant equations (SV) of inclined thin film flow, under the assumption of hydrodynamic stability (or stability of constant solutions) of their endstates, a necessary condition for stability of shock profiles in standard Sobolev norms. It was shown under this condition that all profiles are {\it monotone decreasing} and {\it nonlinearly stable}. Notably, this conclusion includes both smooth and discontinuous (``subshock'' containing) profiles. 

In this paper, motivated by studies \cite{RYZ1,RYZ2} of the closely related Richard--Gavrilyuk model (RG) for inclined thin film flow, in which nonmonotone profiles, and profiles with hydrodynamically unstable endstates, play a prominent role in asymptotic behavior, we revisit this problem in more detail, seeking nonmonotone profiles in the hydrodynamically unstable case. These of course cannot be stable in standard Sobolev norms, but as seen in \cite{RYZ1}, they can nonetheless be {\it convectively stable}, or stable in an appropriately exponentially-weighted norm, hence relevant to time-asymptotic behavior. Interestingly, we do find such waves, in a case that was neglected\footnote{It was incorrectly stated there, as a side remark, that for hydrodynamically unstable endstates, the only hydraulic shock profiles were smooth, ``reverse''-direction shocks not connected to equilibrium dynamics.} in \cite{YZ}, and they appear to be convectively stable over a certain, computable regime.


The above observations have motivated the development of convective counterparts \cite{GR,FR2} to general results converting spectral stablity into linear and nonlinear stability results \cite{DR1,FR1}. Specializing \cite{FR2} to the present case, we are able to supplement our complete spectral classification with corresponding nonlinear stability results.

The inviscid Saint-Venant equations in Eulerian, nondimensionalized form appear as
\ba \label{sv}
\d_th+\d_xq&=0,\\
\d_tq+\d_{x}\left(\frac{q^2}{h}+\frac{h^2}{2F^2}\right)&=h-\frac{|q|q}{h^2},
\ea
where $h$ is fluid height; $q=hu$ is total flow, with $u$ fluid velocity; and $F>0$ is the {\it Froude number}, a nondimensional parameter depending on reference height/velocity and inclination.

These form a $2\times 2$ {\it relaxation system}, with associated formal equilibrium equation
\be\label{CE}
\d_t h + \d_x  q_*(h)=0, \qquad q_*(h):=h^{3/2},
\ee
where $q_*$ is determined by the equilibrium condition that the second component of the righthand side of \eqref{sv} vanish. The first-order, principal part of \eqref{sv}, meanwhile, coincides with the {\it equations of isentropic $\gamma$-law gas dynamics} with $\gamma =2$ \cite{Bre}. System \eqref{sv} admits constant solutions in the form of {\it equilibria} $(h,q)=(h_0,q_*(h_0))$. Stability of constant solutions, known as {\it hydrodynamic stability}, is equivalent for $2\times 2$ relaxation sytems to
the {\it subcharacteristic condition} that the equlibrium characteristic $q_*'(h_0)$ of \eqref{CE} lies between the  characteristic speeds of \eqref{sv}, yielding the classical condition of Jeffreys \cite{Je},
\be\label{hydrostab}
F<2.
\ee
Note the very special property that the condition does not depend on the particular value of $h_0$. For further discussion, see \cite{JNRYZ,YZ} on \eqref{sv} and \cite{BJRZ,RZ,BJNRZ,BJNRZ-meca} on its viscous counterpart.

In the hydrodynamically stable regime $F<2$, one does expect persistent asymptotically-constant traveling wave solutions 
\be\label{prof}
(h,q)(t,x)= (H,Q)(x-ct), \quad \lim_{z\to - \infty}(H,Q)(z)= (H_L,Q_L), \; \lim_{z\to - \infty}(H,Q)(z)= (H_R,Q_R), 
\ee
analogous to shock waves of \eqref{CE}, known as {\it relaxation shocks}, or relaxation profiles, as verified in \cite{YZ}. However, the hydrodynamically unstable regime 
$F>2$
is also of interest, in both the convectively stable regime, since this is compatible with the description of large-time dynamics arising from compactly supported perturbations of Riemann data, and, in any case, as a scenario for complex behavior and pattern formation \cite{BJRZ,RYZ1}, 
with profiles \eqref{prof} serving as potential building blocks for more complicated patterns. 
Here, we carry out an exhaustive study of existence and convective stability of hydraulic (SV) shocks
for general $F$, including both cases $F\gtrless 2$.

\subsection{Results}
We now briefly state our main results, to be expanded in the remainder of the paper.

\medskip

\subsubsection{Existence} (Section \ref{s:existence})
Expanding on the results of \cite{YZ} for $F<2$, we categorize in Proposition \ref{existprop}
all possible types of possible hydraulic shocks: namely, the three monotone types (i), (iv), and (v) 
noted in \cite{YZ}, together with two new nonmonotone types (ii) and (iii) arising for $F>2$.
These are displayed graphically in the left and right panels of Figure \ref{existence:fig},
the left one organized by the parameter $H_R/H_L$ used in \cite{YZ} and
the right one by a new, more convenient parameter $\nu_0:=\sqrt{H_{max}/H_{min}}$ in which the figures are more clear. Here $H_L$ and $H_R$ refer to the left and right limiting heights of the traveling
wave, and $H_{max}$ and $H_{min}$ to the maximum and minimum heights.
We note that type (ii)-(v) waves connect equilibria $(H_L,H_R)$ corresponding to shocks of the scalar
equilibrium system \eqref{CE}, whereas type (i) waves are smooth, monotone increasing in height, and
connect $(H_L,H_R)$ in the direction of a ``reverse shock'' of \eqref{CE}.
The former, ``forward-equilibrium shocks'' exist precisely for
\be\label{econd}
\nu:=\sqrt{{H_L}/{H_R}}>1, \quad F<\nu(\nu+1).
\ee

\subsubsection{Spectral stability} (Sections \ref{s:spectral} and \ref{s:sl})
In Section \ref{s:spectral}, we investigate stability of essential spectra in the class of scalar weighted norms, or, equivalently here, stability of absolute spectra.  We show that this fails for type (i) waves, corresponding to ``reverse'' equilibrium shocks, but is satisfied for type (ii)-(v) waves under condition
\be\label{abscond}
F < \sqrt{2\nu(\nu+1)}
\ee
(always satisfied for cases (iii)-(v)) slightly stronger than the existence condition \eqref{econd}. Indeed, as noted in Remark \ref{rk:analyticity-failure}, essential stability fails for type (i) waves and for type (ii) waves failing to satisfy \eqref{abscond} in
{\it any} type of weighted norm, yielding a conclusive result of convective instability for these types.

In Section \ref{s:sl}, we study stability of point spectrum for the remaining cases (ii), \eqref{abscond} and (iii)-(v), extending the generalized Sturm-Liouville argument introduced in \cite{YZ,SYZ} for the treatment of cases (iv)-(v). Remarkably, we are able to rigorously verify stability of point spectrum whenever the essential stability condition \eqref{abscond} is satisfied. Taken together, these results completely characterize spectral stability of hydraulic shocks of all types.
The results are displayed graphically in the panels of Figure \ref{stability:fig}.

\subsubsection{Linear and Nonlinear stability} (Section \ref{s:stab})
In Section \ref{s:stab}, we investigate for spectrally stable waves the questions of linear and nonlinear stability,
providing 
a result of convective asymptotic time-exponential orbital stability, or convergence to a 
translate of the original traveling wave. This implies in particular, time-exponential stability under localized (e.g., Gaussian- or compact-support) perturbations, a result that is new even for the $F<2$ case considered in \cite{YZ}. We have chosen here to derive these results by specializing to \eqref{sv} the general theory from \cite{FR2}. Despite the fact that analyzing directly \eqref{sv} would come with significant simplifications due to the special structure of systems of two equations compared to general systems, a detailed analysis would still be rather technical and long, without conveying much specific insight about the dynamics at hand. We stress moreover that, though the results of \cite{YZ} do not apply to the present case, their proof does contain all the main ingredients to yield the nonlinear stability of interest. Again, though a simpler form of the arguments of \cite{YZ} would be sufficient here, since time-exponential decay is simpler to handle than time-algebraic decay, a self-contained exposition of such an adaptation would still be rather long and technical.

\subsubsection{Global time-asymptotic dynamics} (Section \ref{s:num})
Finally, in Section \ref{s:num}, we carry out using CLAWPACK \cite{C1,C2} numerical experiments with (perturbed and unperturbed) ``Riemann'' or ``dambreak'' data consisting of constant equilibrium states to either side of an initial jump discontinuity, testing the ``real life'' validity of our rigorous existence/stability conclusions, in the sense of large-amplitude perturbations and resulting time-asymptotic behavior, or ``generalized Riemann solution''. We see that our analytically derived stability conditions indeed predict not only small-perturbation stability or instability, but large-scale asymptotic behavior.  Specifically, when stability holds, the asymptotic response to even large-scale localized perturbations is convergence to a hydraulic shock, monotone or nonmonotone as the case may be.  

When stability fails, on the other hand- recall, through instability of {\it essential spectrum}, having to do with convective stability of the constant left endstate of the shock- we see bifurcation to an ``invading front'' connecting roll wave patterns on the left to a constant state on the right: that is, an ``essential bifurcation'' such has been studied for smooth waves of reaction-diffusion systems in \cite{SS}. That is, our (local) stability conditions indeed successfully predict large-scale asymptotic behavior. Interestingly enough, in all our experiments the expanding speed of the instability pattern is well-predicted by the heuritics of \cite{FHSS}.

\subsection{Discussion and open problems}\label{s:disc}
The Saint Venant model has proven remarkably amenable to analysis, admitting complete solutions to both existence and stability questions now in a variety of settings. The present analysis fits among this list, giving complete and definitive answers to the questions of existence and convective stability of hydraulic shock solutions. In particular, the fact that absolute and point spectral stability could be completely characterized is quite remarkable and apparently special to (SV). It is a very interesting open problem to what extent the Sturm Liouville arguments used here might extend to large-amplitude traveling waves of general $2\times 2$ relaxation systems under a condition
of convectively stable essential spectrum, generalizing the treatment by Liu \cite{L} of small-amplitude waves in the hydrodynamically stable case.

The analyses of linear and nonlinear stability in \cite{YZ} also rely in places on specific 
computations for (SV). However, different from the situation as regards spectral stability,
the strategies for converting spectral to nonlinear stability are rather general, and
could be expected under appropriate structural conditions to carry over to the general case of relaxation models. These considerations motivate a more general and systematic study of such problems, as done by the first two authors for exponentially spectrally stable Riemann shocks \cite{FR1}, and will be the object of a future publication \cite{FR2} from which we already borrow some results.

Jointly with \cite{JNRYZ,SYZ,YZ}, the present contribution provides for (SV) an almost complete classification of traveling waves from the point of view of existence and spectral stability. Nevertheless we would like to point out that even in (SV), at the spectral level, a stability classification of waves that have characteristic points but are not periodic is still missing. Likewise one of the outstanding remaining puzzle in the nonlinear stability of relaxation waves, either smooth or discontinuous, is the treatment of waves with characteristic points, generalizing to the system case the scalar analysis of \cite{DR2}. At the nonlinear level, the corresponding difficulties are expected to occur also in the analysis of the dynamics near roll waves, which has not been touched even for (SV); see for example the discussions of \cite{JNRYZ}. Indeed, there are some additional difficulties for (SV) due to an infinite-dimensional center manifold coming from degeneracy of the model \cite{JNRYZ}. We find this to be the main open problem in the theory of general (including periodic) traveling waves. 

Finally, we mention as an interesting direction for further investigation, the rigorous treatment of the phenomenon of essential bifurcation/invading roll wave fronts that we see in our numerical experiments, the lack of smoothness and parabolic smoothing making this a nonstandard problem not covered by the methods of \cite{SS} and related references.

\section{Existence of traveling waves}\label{s:existence}

In this section, we recover the basic existence theory from \cite[Prop. 1.1]{YZ}, in the process unraveling the nonmonotone case omitted there. We focus on traveling waves with piecewise smooth profiles without characteristic point, neither on profiles nor at infinity. The presence of characteristic points is expected to have dramatic effects on the existence, spectral stability and nonlinear dynamics; see the related analyses in \cite{JNRYZ,DR2}. We also restrict to waves with nonnegative velocities so that absolute values may be dropped, but one may be careful to check as a consistency condition that indeed $Q\geq0$.

To expect some form of uniqueness when dealing with discontinuous solutions we need to impose some form of entropy conditions. Combined with the non-characteristic assumption, even the weaker forms of the latter imply that the traveling wave profiles exhibit at most one discontinuity. We again refer to \cite{JNRYZ,DR2} for a detailed discussion. Without loss of generality, by translational invariance, the discontinuity of wave profiles may be fixed at $x=0$. When restricting further to asymptotically constant profiles, they also yield that limiting endstates are distinct.

Here and elsewhere, let $[h]_x:=h(x^+)-h(x^-)$ of a quantity $h$ across a discontinuity located at $x$, and $[h]:=[h]_0$. In smooth regions, traveling-wave solutions \eqref{prof} satisfy
\begin{align}
-cH'+Q'&=0,&-cQ'+\left(\frac{Q^2}{H}+\frac{H^2}{2F^2} \right)'&=H-\frac{|Q|Q}{H^2},&
\label{TW}
\end{align}
whereas at discontinuities, we have the Rankine-Hugoniot conditions
\begin{align}
-c\left[H\right]+\left[Q\right]&=0,&-c\left[Q\right]+\left[\frac{Q^2}{H}+\frac{H^2}{2F^2} \right]&=0.&
\label{RaHu}
\end{align}
A simple observation is that the end states $(H_R,Q_R)$ and $(H_L,Q_L)$ of the traveling wave profiles \eqref{prof} must be equilibria, that is $Q_{L,R}=q_*(H_{L,R})=H_{L,R}^{3/2}$ (since we are working in the physical range $H>0$). Combined together the first halves of \eqref{TW} and \eqref{RaHu} are equivalent to the existence of a constant $q_0$ such that
\be
cH-Q\equiv q_0\,.
\label{conservative}
\ee
With such a $q_0$ fixed, the second equation of \eqref{TW} leads to the scalar ODE
\be
\left(-\frac{q_0^2}{H^2}+\frac{H}{F^2}\right)H'=\frac{H^3-(cH-q_0)^2}{H^2},
\label{ODEint}
\ee
while the second Rankine-Hugoniot condition in \eqref{RaHu} reads
\be
\left[\frac{q_0^2}{H}+\frac{H^2}{2F^2} \right]=0.
\label{RaHubis}
\ee

Equation \eqref{ODEint} is a scalar first-order ODE, so that it cannot connect smoothly an endstate to itself (in a non stationary way). We have already discussed that when instead a discontinuity is indeed present, we must have $(H_R,Q_R)\neq(H_L,Q_L)$ so that in any case $H_L\neq H_R$. Therefore from \eqref{conservative} stems that $(H_L,H_R, c,q_0)$ must satisfy 
\begin{align*}
c&=\frac{q_*(H_L)-q_*(H_R)}{H_L-H_R}=\frac{H_L+\sqrt{H_LH_R}+H_R}{\sqrt{H_L}+\sqrt{H_R}},&
q_0&=\frac{H_LH_R}{\sqrt{H_L}+\sqrt{H_R}}\,,
\end{align*}
 and necessarily $c>0$, $q_0>0$. Note that then the condition $Q\geq 0$ becomes $H\geq q_0/c$ with
\[
 \frac{q_0}{c}
=\frac{H_LH_R}{H_L+\sqrt{H_LH_R}+H_R}<\min(\{H_L,H_R\})\,.
\]
Moreover, from the sign of $q_0$ and entropy conditions stem, when a discontinuity is present,
\begin{align*}
-\frac{q_0}{H_L}+\sqrt{\frac{H_L}{F^2}}&>0>-\frac{q_0}{H_R}+\sqrt{\frac{H_R}{F^2}}\,,
\end{align*}
which is equivalently written as
\begin{align*}
H_L&>H_s>H_R\,,&
H_s&:=\left(q_0\,F\right)^{\frac23}\,.
\end{align*}

The scalar ODE \eqref{ODEint} may be factorized as
\be
\label{profileODE}
H'=\frac{F^2 \left(H - H_L\right) \left(H - H_R\right) \left(H-H_{out}\right)}{(H-H_s)(H^2+HH_s+H_s^2)},
\ee
where 
\[
H_{out}:=\frac{H_LH_R}{(\sqrt{H_L}+\sqrt{H_R})^2}
=\frac{H_LH_R}{H_L+2\sqrt{H_LH_R}+H_R}\,.
\]
Note that in any case $H_{out}<q_0/c<\min(\{H_L,H_R\})$ and recall that solutions to \eqref{profileODE} taking values below $q_0/c$ have no significance for the original traveling wave profile problem. Therefore for the discontinuous profiles, one needs $H_{out}<H_R<H_s<H_L$ and a simple one-dimensional phase-portrait analysis shows that the piece converging to $H_R$ must be constant. As a consequence, in terms of $H_*=H(0^-)$, \eqref{RaHubis} is reduced to
\[
\frac{q_0^2}{H_*}+\frac{H_*^2}{2F^2}=\frac{q_0^2}{H_R}+\frac{H_R^2}{2F^2}
\]
which possesses a unique positive solution distinct from $H_R$
\[
H_*:=\sqrt{2\frac{H_s^3}{H_R}+\frac{H_R^2}{4}}-\frac{H_R}{2}\,.
\]
Note that $H_s>H_R$ implies $H_*>H_R$.

For the sake of comparison with \cite{YZ}, let us introduce the scaling parameter
\[
\nu:=\sqrt{\frac{H_L}{H_R}}
\]
and express the above quantities as
\begin{align*}
c&=\frac{\nu^2+\nu+1}{\nu+1}\sqrt{H_R}\,,&
q_0&=\frac{\nu^2}{\nu+1}H_R^{\frac32}\,,&
H_s&=\left(\frac{F\nu^2}{\nu+1}\right)^{\frac{2}{3}}H_R\,,&
\end{align*}
and
\begin{align*}
H_{out}&=\frac{\nu^2}{\nu^2+2\nu+1}H_R\,,&
H_*&=\frac{-(\nu+1)+\sqrt{8F^2\nu^4+\nu^2+2\nu+1}}{2\left(\nu+1\right)}H_R\,.
\end{align*}

We have the following extension/correction of \cite[Prop. 1.1]{YZ}. Cases (ii) and (iii) were mistakenly omitted there.

\bpr\label{existprop}
Let $(H_L,H_R)$ be a couple of positive heights.\\
When $H_L<H_R$, that is when $\nu<1$, there exists only one kind of non-characteristic wave profiles connecting $H_L$ to $H_R$,
\begin{itemize}
\item [(i)] 
\emph{increasing smooth profiles}, that do exist if and only if $H_L<H_R<H_s$, that is, if and only if
\be\label{case1}
\nu<1\,,\qquad\frac{\nu+1}{\nu^2}<F.
\ee
\end{itemize}
When $H_R<H_L$, that is when $\nu>1$, there exist four kinds of non-characteristic waves connecting $H_L$ to $H_R$,
\begin{itemize}
\item [(ii)] 
\emph{nonmonotone discontinuous profiles}, consisting of a smooth portion
increasing from $H_L$ to $H_*$, connected by an entropy-admissible Lax shock to a portion constant equal to $H_R$, that do exist if and only if $H_R<H_s<H_L<H_*$, that is, if and only if
\be\label{case2}
\nu>1\,,\qquad
\frac{(\nu+1)\sqrt{2(\nu^2+1)}}{2\nu}  < F    <\nu (\nu+1);
\ee
\item[(iii)] 
\emph{Riemann profiles}, consisting of a portion equal to $H_L$, connected by an entropy-admissible Lax shock to a portion constant equal to $H_R$, that do exist if and only if $H_R<H_s<H_*=H_L$, that is, if and only if
\be\label{case3}
\nu>1\,,\qquad
F=\frac{(\nu+1)\sqrt{2(\nu^2+1)}}{2\nu};
\ee
\item[(iv)] 
\emph{decreasing discontinuous profiles}, consisting of a smooth portion
decreasing from $H_L$ to $H_*$, connected by an entropy-admissible Lax shock to a portion constant equal to $H_R$, that do exist if and only if $H_R<H_s<H_*<H_L$, that is, if and only if
\be\label{case4}
\nu>1\,,\qquad
\frac{\nu+1}{\nu^2}<F<\frac{(\nu+1)\sqrt{2(\nu^2+1)}}{2\nu};
\ee
\item[(v)] 
\emph{smooth decreasing profiles}, that do exist if and only if $H_s<H_R<H_L$, that is, if and only if
\be\label{case5}
\nu>1\,,\qquad
F<\frac{\nu+1}{\nu^2}.
\ee
\end{itemize}
\epr

\begin{proof}
Simple one-dimensional phase-portrait considerations provide the classification in terms of respective positions of $H_L$, $H_R$, $H_s$ and $H_*$, that may be readily translated as conditions on $\nu$ and $F$. There only remains to point out that, in case (ii), we have used that when $\nu>1$,
\[
\frac{\nu+1}{\nu^2}<\frac{(\nu+1)\sqrt{2(\nu^2+1)}}{2\nu}
\]
to discard as redundant one of the inequalities. Incidentally we also point out that when $\nu>1$ 
\[
\frac{(\nu+1)\sqrt{2(\nu^2+1)}}{2\nu}<\nu (\nu+1)\,,
\]
so that case (ii) is indeed non empty. This completes the proof.
\end{proof}
\begin{figure}[htbp]
    \centering
    \includegraphics[scale=0.335]{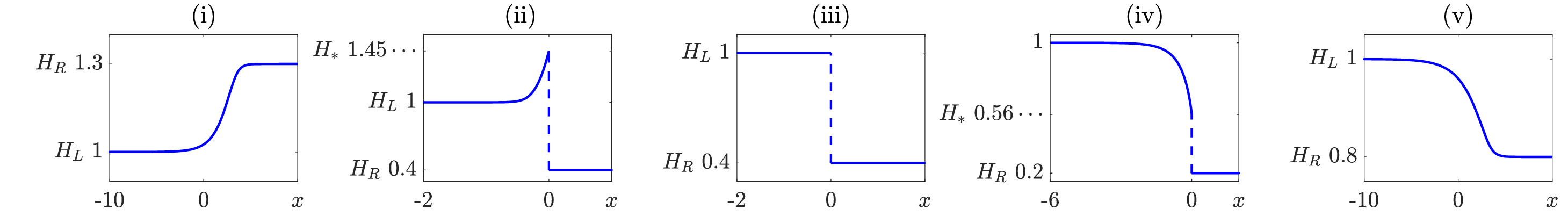}
    \caption{Examples of cases (i)--(v) of hydraulic shock profiles prescribed in Proposition~\ref{existprop}. (i) an increasing smooth profile with $H_L=1$, $H_R=1.3$, $F=3$;  (ii) a nonmonotone discontinuous profile with $H_L=1$, $H_R=0.4$, $F=3$; (iii) a Riemann profile with $H_L=1$, $H_R=0.4$, $F=\tfrac{\sqrt{7}}{2}+\sqrt{\tfrac{7}{10}}$; (iv) a decreasing discontinuous profile with $H_L=1$, $H_R=0.2$, $F=1.5$; (v) a smooth decreasing profile with $H_L=1$, $H_R=0.8$, $F=1.5$.}
    \label{fig:profile}
\end{figure}
\br\label{hydrormk}
With hydrodynamical stability in mind, let us compare different $\nu$-dependent Froude thresholds to the critical value $2$. For any $\nu<1$, 
\[
\frac{\nu+1}{\nu^2}=\frac{1}{\nu}+\frac{1}{\nu^2}>2
\]
so that case (i) is always hydrodynamically unstable. When $\nu>1$, 
\begin{align*}
\frac{\nu+1}{\nu^2}=\frac{1}{\nu}+\frac{1}{\nu^2}&<2\,,&
\frac{(\nu+1)\sqrt{2(\nu^2+1)}}{2\nu}&>2\,,&
\end{align*}
the latter inequality following from the fact that its left-hand side is increasing with $\nu$ and takes the value $2$ at $\nu=1$. Thus cases (ii) and (iii) are always hydrodynamically unstable, case (v) is always hydrodynamically stable and case (iv) may or may not be hydrodynamically unstable. Case (v), and case (iv) when $F<2$ have been thoroughly analyzed in \cite{SYZ,YZ}. 
\er

\br\label{rollrmk}
In the above discussion, we have decided in advance that we were looking for non-characteristic traveling waves connecting $H_L$ to $H_R$. For the convenience of the reader, we now provide a more systematic treatment of non constant waves in terms of
\begin{align*}
H_{min}&:=\min(\{H_L,H_R\})\,,&
H_{max}&:=\max(\{H_L,H_R\})\,,&
\nu_0&:=\sqrt{\frac{H_{max}}{H_{min}}}\,,&
\end{align*}
with $(H_{min},H_{max})$ now merely playing the role of wave parameters (replacing $(c,q_0)$).
\begin{enumerate}
\item When $F>\nu_0(\nu_0+1)$, only waves of case (i) exist, with $H_L=H_{min}$ and $H_R=H_{max}$. 
\item When $F=\nu_0(\nu_0+1)$, $H_s=H_{max}$ and there exist two families of traveling waves, one family with each member beginning by a smooth infinite portion arising from $H_L=H_{min}$, connected by a Lax shock to an infinite array of increasing portions passing though $H_s$, connected by Lax shocks, the family being parameterized by an arbitrary sequence of lengths taken in $(0,+\infty)^\NN$, the other family with each member consisting in an infinite\footnote{In both directions.} array of increasing portions passing though $H_s$, connected by Lax shocks, the family being parameterized by an arbitrary sequence of lengths taken in $(0,+\infty)^\ZZ$. The latter family includes periodic ``roll wave'' solutions of the type discovered by Dressler \cite{Dr}, that is, periodic traveling-wave solutions with exactly one discontinuity and one characteristic point by period. A comprehensive study of their spectral stability may be found in \cite{JNRYZ}.
\item When
\[
\frac{\nu_0+1}{\nu_0^2}  < F    <\nu_0 (\nu_0+1)
\]
only waves of cases (ii)-(iii) and (iv) exist, with $H_R=H_{min}$ and $H_L=H_{max}$. 
\item When 
\[
F=\frac{\nu_0+1}{\nu_0^2}\,,
\]
there exists no wave.
\item When 
\[
F<\frac{\nu_0+1}{\nu_0^2}\,,
\]
only waves of case (v) exist, with $H_R=H_{min}$ and $H_L=H_{max}$.
\end{enumerate}
\er
We summarize the existence results in Figure~\ref{existence:fig}.

\begin{figure}[htbp]
\begin{center}
\includegraphics[scale=0.32]{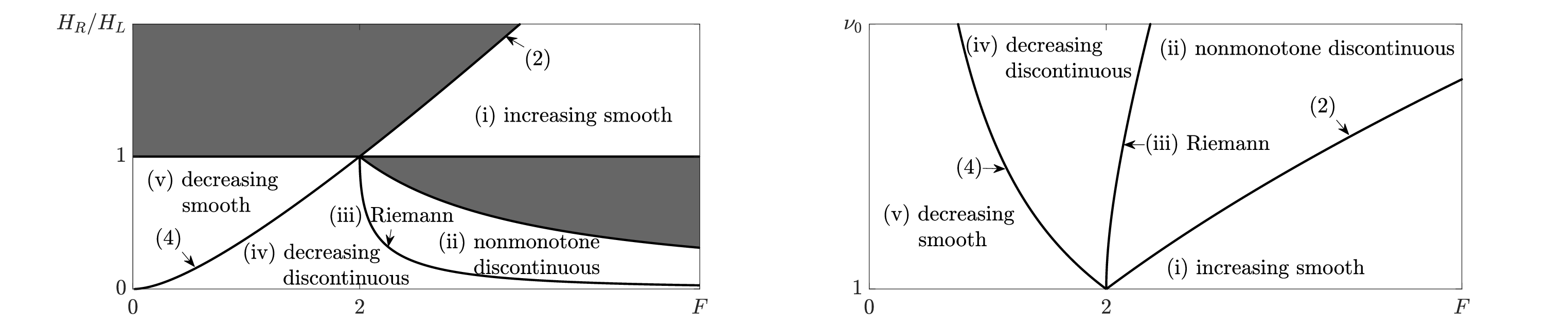}
\end{center}
\caption{Left panel: domains of cases (i)-(v) from Proposition~\ref{existprop}, extending the scope of \cite[Fig. 3. (b)]{YZ} beyond the box $0<H_R/H_L<1$, $0<F<2$ (note that, by re-scaling, $H_L$ is fixed to be $1$ in \cite{YZ}); Right panel: visualization of domains of cases (i)-(v) (2) (4) by incorporating $\nu_0$ from Remark~\ref{rollrmk}. }
\label{existence:fig}
\end{figure}

\section{Spectral framework and essential spectrum}\label{s:spectral}

We now turn to an examination of the spectral stability of waves listed in Proposition~\ref{existprop}. When doing so, we use extensively standard elements of spectral theory specialized to nonlinear wave stability. We give little detail on those but rather refer the reader to the already classical \cite{ZH,Z-course,MZ,Sandstede,KapitulaPromislow-stability} for detailed comprehensive exposition and to the recent \cite{Blochas-Rodrigues} for a self-contained worked-out case that could hopefully be used as a gentle entering gate. For discontinuous waves, this involves, at least implicitly, Evans-Lopatinski\u{\i} determinants, that interpolate between pure Evans functions used in smooth wave analysis and pure Lopatinski\u{\i} determinants used to analyze local-in-time persistence near shocks. On the latter we refer for instance to \cite[Section~4.6]{BenzoniGavage-Serre_multiD_hyperbolic_PDEs}. Evans-Lopatinski\u{\i} determinants are commonly encountered in the literature about spectral and linear stability of shocks; see for instance \cite{Godillon,Godillon-Lorin,Texier-Zumbrun,JNRYZ}. 

\subsection{Linearization and spectrum}

To introduce the relevant spectral problem in a concise way, let us write \eqref{sv} in standard abstract form 
\be\label{abstract}
\partial_t w + \partial_x(f(w))= r(w)\,,
\ee
with
\begin{align*}
w&:=\bp h\\ q\ep\,,&
f(w)&:=\bp q \\ \frac{q^2}{h}+\frac{h^2}{2F^2}\ep\,,& 
r(w)&:=\bp 0 \\ h-\frac{|q|q}{h^2} \ep\,.
\end{align*}
System \eqref{abstract} must be satisfied at least in weak sense, thus, for piecewise smooth solutions we impose \eqref{abstract} to hold in a strong sense on domains corresponding to smooth parts and along a jump whose location at time $t$ is at $\varphi(t)$ we impose Rankine-Hugoniot jump conditions
\be\label{RH}
\frac{\md \varphi}{\md t} [w]_\varphi =[f(w)]_\varphi\,.
\ee

Pick a non-characteristic traveling wave of profile $W:=(H,Q)$ and speed $c$. When $W$ is smooth, writing equations in terms of $v$, with $w(t,x)=W(x-ct)+v(t,x-ct)$, and replacing nonlinear terms with a source term, lead to 
\be\label{affine-smooth}
\partial_t v+ \partial_x(A\,v) = E\,v+F,
\qquad\textrm{on }\R_+\times\R
\ee
where the source term $F$ depends on space and time but the matrix-valued coefficients $A$ and $E$ depend only on $x$ and are explicitly given by
\bqs
A(x):=\left(\begin{matrix} -c & 1 \\ -\frac{Q(x)^2}{H(x)^2}+\frac{H(x)}{F^2} & -c+2\frac{Q(x)}{H(x)} \end{matrix} \right), \quad E(x):=\left(\begin{matrix} 0 & 0 \\ 1+2\frac{Q(x)^2}{H(x)^3} & -2\frac{Q(x)}{H(x)^2} \end{matrix} \right)\,.
\eqs
In turn, when $W$ possesses a discontinuity at $0$, proceeding in the same way but in terms of $(v,\psi)$, with $w(t,x)=W(x-(ct+\psi(t)))+\tilde{w}(t,x-(ct+\psi(t)))$, $v=\tilde{w}-\psi\,W'$, yields
\be\label{affine-disc}
\left\{
\begin{array}{rl}
\partial_t v+ \partial_x(A\,v)&=\ E\,v+F,\qquad\textrm{on }\R_+\times\R^*,\\
\frac{\md \psi}{\md t}\left[\,W\,\right] -\psi\left[r(W)\right]
&=\ \left[Av\right]+G,\qquad\textrm{on }\R_+,
\end{array}
\right.
\ee
where $(A,E,F)$ are as above and $G$ is a time-dependent source term. 

\br\label{sourcermk}
It is customary in smooth wave analysis to directly discard source terms. This is justified by the fact that when considering initial value problems one may recover the general source-term case through Duhamel's formula. However, for discontinuous waves, the linearized problem is a mixed initial boundary value problem and the arguments fails. The source terms $G$ that may be recovered by the Duhamel formula are those that are pointwise in time colinear with $[W]$. On a directly related note, let us observe that whereas \eqref{affine-smooth} directly fits in standard semigroup theory, \eqref{affine-disc} does not but it does belong to the class of problems that can be analyzed through the more general, infinite-dimensional Laplace transform theory, as covered in \cite{ABHN_Laplace}, and we shall extrapolate standard spectral terminology to this case.
\er

Applying the Laplace transform to the above linearized problems yields respectively
\be\label{spectral-smooth}
\lambda v+(A\,v)'=E\,v+F,\qquad\textrm{on }\R
\ee
and
\be\label{spectral-disc}
\left\{
\begin{array}{rl}
\lambda v+(A\,v)'&=\ E\,v+F,\qquad\textrm{on }\R^*,\\
\psi\left[\,\lambda\,W-r(W)\right]
&=\ \left[Av\right]+G,
\end{array}
\right.
\ee
with a different meaning for $(v,\psi,F,G)$, and $\lambda\in\C$ a spectral parameter. For the sake of concision, let us set
\begin{align*}
L_\lambda(v)&\,:=\,\lambda v+(A\,v)'-E\,v\,,\\
L_\lambda((v,\psi))&\,:=\,(\lambda v+(A\,v)'-E\,v,\psi\left[\,\lambda\,W-r(W)\right]
-\left[Av\right])\,,
\end{align*}
in respective cases.

For some choice of functional spaces $(X,Y)$, we say that $\lambda$ does not belong to the $(X,Y)$-spectrum of either \eqref{affine-smooth} or \eqref{affine-disc} if and only $L_\lambda$ is invertible as a bounded operator from $Y$ to $X$. In the smooth case, this matches the classical definition of the spectrum of the generator of the dynamics on $X$, when $Y$ is chosen to be the corresponding domain.

Consistently we say that the wave under consideration is spectrally $(X,Y)$-stable if the corresponding $(X,Y)$-spectrum is included in $\{\lambda\in\C;\Re(\lambda)\leq 0\}$ and that it is spectrally $(X,Y)$-unstable otherwise. We say that it is exponentially spectrally $(X,Y)$-stable if there exists $\theta>0$ such that the $(X,Y)$-spectrum is included in $\{\lambda\in\C;\Re(\lambda)\leq -\theta\}\cup\{0\}$ and $0$ has multiplicity $0$ if the wave is smooth and $W'\notin Y$, $1$ otherwise.

When stability/instability is considered with respect to $(X,Y)=(L^2(\R;\C^2),H^1(\R;\C^2))$ in the smooth case, or $(X,Y)=(L^2(\R^*;\C^2)\times \C,H^1(\R^*;\C^2)\times\C)$ in the discontinuous case, we drop any mention to the functional pair $(X,Y)$. This particular choice of functional spaces takes into account that our profiles are non-characteristic. From this property also stems that the spectrum is not really affected by the level of regularity encoded by functional spaces provided that they are chosen consistently. However it is strongly impacted by the level of localization. 

To take this into account, we introduce for $(\eta_L,\eta_R)\in \R^2$, the weighted spaces 
\begin{align*}
\mathcal{X}_{\eta_L,\eta_R}&(\R;\C^2)\\
&:=\left\{\,v\in\mathcal{X}_{loc}(\R;\C^2)\,|\,
(e^{-\eta_L\,\cdot}v)_{|\R_-}\in\mathcal{X}(\R_-;\C^2)\textrm{ and }
(e^{-\eta_R\,\cdot}v)_{|\R_+}\in\mathcal{X}(\R_+;\C^2)\,\right\}\\
\mathcal{X}_{\eta_L,\eta_R}&(\R^*;\C^2)\\
&:=\left\{\,v\in\mathcal{X}_{loc}(\R^*;\C^2)\,|\,
(e^{-\eta_L\,\cdot}v)_{|\R_-^*}\in\mathcal{X}(\R_-^*;\C^2)\textrm{ and }
(e^{-\eta_R\,\cdot}v)_{|\R_+^*}\in\mathcal{X}(\R_+^*;\C^2)\,\right\}\,,&
\end{align*}
with $\mathcal{X}=L^2$ or $\mathcal{X}=H^1$. Consistently, when talking about stability, we replace any mention to a pair $(X,Y)$ with the adverb \emph{convectively} if it can be achieved respectively with $(X,Y)=(L^2_{\eta_L,\eta_R}(\R;\C^2),H^1_{\eta_L,\eta_R}(\R;\C^2))$ or $(X,Y)=(L^2_{\eta_L,\eta_R}(\R^*;\C^2)\times \C,H^1_{\eta_L,\eta_R}(\R^*;\C^2)\times \C)$ for some $(\eta_L,\eta_R)$ such that $\eta_L\geq0$ and $\eta_R\leq0$. Correspondingly convective instability refers to the failure of convective stability. When it will be convenient to keep track of the chosen weights we will replace the general term ``convectively'' with the more specific term ``$(\eta_L,\eta_R)$-weightedly''.

\br\label{nonlinearrmk}
The constraint ($\eta_L\geq0$ and $\eta_R\leq0$) imposed in the definition of convective stability is motivated by the will to pave the way for nonlinear analysis. At a semi-abstract level, a functional space $Z$ appearing at the spectral level (for scalar components) is thought as a good space for nonlinear analysis if $Z\cap L^\infty$ is an algebra. This leads to the above requirements on weights. In the discontinuous case, another obstruction to a nonlinear analysis may be anticipated. Indeed in a Duhamel formulation source terms would contain terms that decay spatially like the square of components of $\psi\,W'$, which may belong to a $(\eta_L,\eta_R)$-weighted space only if\footnote{The first part corresponds to the Riemann shock case.} $W'\equiv 0$ or 
\begin{align*}
\eta_L&<2\eta_L^\infty\,,
\end{align*}
where 
\begin{align*}
\eta_L^\infty&:=\frac{F^2\left(H_L-H_R\right) \left(H_L-H_{out}\right)}{(H_L-H_s)(H_L^2+H_LH_s+H_s^2)}\,.
\end{align*}
The situation is dramatically different in the smooth case since there one needs to introduce a phase shift (which would also appear in nonlinear terms) only if $W'$ does belong to the kernel of $L_0$, that is, only if $\eta_L<\eta_L^\infty$ and $\eta_R>\eta_R^\infty$ where $\eta_L^\infty$ is as above and
\begin{align*}
\eta_R^\infty&:=\frac{F^2 \left(H_R - H_L\right)\left(H_R-H_{out}\right)}{(H_R-H_s)(H_R^2+H_RH_s+H_s^2)}\,,
\end{align*}
which do imply $\eta_L<2\eta_L^\infty$ and $\eta_R>2\eta_R^\infty$. In the discontinuous case, a phase shift is required no matter what; in the foregoing derivation of \eqref{affine-disc} we have partially hidden it when we have moved from $\tilde{w}$ to $v$. In our definition, for the sake of simplicity, we have chosen not to include the extra constraint $\eta_L<2\eta_L^\infty$ of the discontinuous case but 
as we check in Remark~\ref{follow-up-NL} it turns out that in the present case extra constraints already enforce $\eta_L<\eta_L^\infty$. 
\er

\br\label{weightsrmk}
Our current definition of convective stability/instability uses scalar exponential weights. Though this choice is the most usual one, it is also somewhat arbitrary. However, as we shall detail in Remark~\ref{rk:analyticity-failure}, in the present case, no substantial further gain in stabilization may be expected from the use of more complex weights.
\er

\subsection{Essential spectrum, consistent splitting and absolute instability}\label{s:consist}

A subset of the $(X,Y)$-spectrum is constituted of the $\lambda$ such that $L_\lambda$ is not Fredholm of index $0$ as a bounded operator from $Y$ to $X$. By analogy with the standard case, we call this part the $(X,Y)$-essential spectrum. The essential spectrum is therefore the set of $\lambda$ such that the codimension of the range of $L_\lambda$ and the dimension of its kernel are not equal, a clear obstruction to invertibility, which occurs when both are zero.

By using that being Fredholm of index $0$ is invariant by compact perturbations and that the problem at hand is non characteristic with coefficients converging exponentially fast to their limits, one may derive a characterization of the essential spectrum. We do not provide details on the proof of the latter but we refer the reader to the Appendix to \cite[Chapter~5]{He} for a worked out version in a closely related context. 

To discuss the outcome, we introduce
\begin{align*}
A_h&:=\bp -c&1\\h\,\left(-1+\frac{1}{F^2}\right)&-c+2\sqrt{h}\ep\,,&
E_h&:=\bp 0&0\\3&-\frac{2}{\sqrt{h}}\ep\,,
\end{align*}
\begin{align*}
G_h(\lambda)&:=\,A_h^{-1}\,(E_h-\lambda)
\,=\,\frac{1}{\left(c-\sqrt{h}\right)^2-\frac{h}{F^2}}
\bp -\lambda\,(-c+2\sqrt{h})-3&\frac{2}{\sqrt{h}}+\lambda\\
-\lambda\,h\,\left(1-\frac{1}{F^2}\right)-3\,c&c\,\left(\frac{2}{\sqrt{h}}+\lambda\right)\ep\,,&
\end{align*}
and recall that
\begin{align*}
c-\sqrt{h}&=\frac{q_0}{h}=\frac{1}{F}\frac{H_s^{\frac32}}{h}\,,&
\textrm{when }h=H_L,\,H_R\,.
\end{align*}
Then $\lambda$ does not belong to the $(\eta_L,\eta_R)$-weighted essential spectrum if and only if $G_{H_L}(\lambda)$ has no eigenvalue with real part $\eta_L$, $G_{H_R}(\lambda)$ has no eigenvalue with real part $\eta_R$ and the sum of the number of eigenvalues of $G_{H_L}(\lambda)$ with real part greater than $\eta_L$ and of the number of eigenvalues of $G_{H_R}(\lambda)$ with real part lesser than $\eta_R$ equals $2$ in the smooth case, $1$ in the discontinuous case. 

By continuity in $\lambda$, $(\eta_L,\eta_R)$-weighted stability requires that each of the above-mentioned numbers is constant in $\lambda$ on $\{\,\lambda\,;\,\Re(\lambda)>0\,\}$, a property referred to as \emph{consistent splitting} in part of the literature. Now, note that when $|\lambda|\to\infty$, eigenvalues of $G_h(\lambda)$ expand as 
\[
\frac{\lambda}{c-\sqrt{h}\mp\frac{\sqrt{h}}{F}}+\cO(1)
\]
which when specialized to $h=H_L$ or $H_R$ is equivalently written as
\[
\frac{\lambda}{\frac{q_0}{h}\mp\frac{\sqrt{h}}{F}}+\cO(1)
\,=\,
\frac{\lambda\,h\,F}{H_s^{\frac32}\mp h^{\frac32}}+\cO(1)\,.
\]
The leading order part of these spatial eigenvalues is given by the eigenvalues of $-\lambda\,A_h^{-1}$ and thus is directly connected to the characteristic velocities of $\d_t+A_h\,\d_x$. As a consequence, for $h=H_L$ or $H_R$, for any $\eta>0$ there exists $C_\eta>0$ such that when $|\lambda|\geq C_\eta$ and $\Re(\lambda)\geq \eta$, $G_{H_L}(\lambda)$ has two eigenvalues with positive real parts when $h<H_s$ and eigenvalues with real parts of opposite sign when $h>H_s$.

As a consequence, a specific way in which failure of convective stability (resp. absolute convective instability) may occur in the present case is when for $h=H_L$ or $H_R$ such that $h>H_s$, there exists $\lambda$ with positive real part (resp. nonzero with nonnegative real part) such that the eigenvalues of $G_h(\lambda)$ have the same real part. This scenario matches what is commonly designated in the literature as failure of extended consistent splitting or \emph{absolute instability}. To decide whether an absolute instability may indeed occur, let us first make explicit that the eigenvalues of $G_h(\lambda)$ are given as
\begin{align}\label{eq:gamma}
\gamma_{\pm,h}(\lambda)
:=\frac{1}{\left(c-\sqrt{h}\right)^2-\frac{h}{F^2}}
\,\left(\lambda\,(c-\sqrt{h})-\left(\frac32-\frac{c}{\sqrt{h}}\right)
\pm\sqrt{\cQ_h(\lambda)}\right)
\end{align}
where
\[
\cQ_h(\lambda)
:=\lambda^2\frac{h}{F^2}+\lambda\,\left(-(c-\sqrt{h})+\frac{2\sqrt{h}}{F^2}\right)
+\left(\frac32-\frac{c}{\sqrt{h}}\right)^2\,,
\]
for some determination of $\sqrt{\cQ_h(\lambda)}$. Note that $\gamma_{\pm,h}(\lambda)$ share the same real part exactly when $\cQ_h(\lambda)$ is a nonpositive real number. Since
\begin{align*}
\Re(\cQ_h(\lambda))&=-\Im(\lambda)^2\frac{h}{F^2}+\cQ_h(\Re(\lambda))\,,\\
\Im(\cQ_h(\lambda))&=\Im(\lambda)\,\left(
2\Re(\lambda)\,\frac{h}{F^2}-(c-\sqrt{h})+\frac{2\sqrt{h}}{F^2}\right)\,.
\end{align*}
one readily deduces that the latter does occur for some $\lambda$ with positive real part (resp. nonzero with nonnegative real part) if and only if 
\begin{align*}
c-\sqrt{h}&>\frac{2\sqrt{h}}{F^2}\,,&
\Big(\textrm{ resp. }c-\sqrt{h}&\geq\frac{2\sqrt{h}}{F^2}\ \Big)\,.
\end{align*}

\br\label{rk:analyticity-failure}
In the present case, when convective stability (resp. exponential convective stability) fails in the foregoing way, there also exists a $\lambda$ with positive real part (resp. nonzero with nonnegative real part) such that $\gamma_{\pm,h}(\lambda)$ are equal. At this $\lambda$, the resolvent operator cannot be continuously extended even as an operator from the space of test functions to distributions. This shows that, in the present case, such absolute instabilities cannot be cured in any sensible sense, in particular not by replacing exponential weights by a more general class of reasonable weights.
\er

In order to elucidate further a possible absolute instability, we compute that when $h=H_L$ or $h=H_R$,
\[
c-\sqrt{h}-\frac{2\sqrt{h}}{F^2}
\,=\,\sqrt{h}\left(\frac1F\frac{H_s^{\frac32}}{h^{\frac32}}-\frac{2}{F^2}\right)
\,=\,\begin{cases}
\sqrt{h}\left(\frac{\nu^2}{\nu+1}-\frac{2}{F^2}\right)&h=H_R\,,\\
\sqrt{h}\left(\frac{1}{\nu(\nu+1)}-\frac{2}{F^2}\right)&h=H_L\,.
\end{cases}
\]
Recalling that the scenario also requires $h>H_s$ and observing that when $\nu>1$, 
\begin{align*}
\frac{(\nu+1)\sqrt{2(\nu^2+1)}}{2\nu}<\sqrt{2\nu(\nu+1)}<\nu(\nu+1)
\end{align*}
one deduces that absolute instability may only occur in case (ii) of Proposition~\ref{existprop} and does occur when 
\[
\sqrt{2\nu(\nu+1)}<F<\nu(\nu+1)\,.
\]

\subsection{Smooth fronts}\label{s:smooth}

We temporarily restrict the discussion to smooth profiles, that is, to cases (i) and (v) of Proposition~\ref{existprop}.

Case (v) has already been studied in \cite[Section~3]{SYZ} with conclusion that all profiles of case (v) are spectrally stable but not exponentially spectrally stable. With a few more simple computations one may even check that this spectral stability is of diffusive type in a sense compatible with the application of general results from \cite{MaZ} and conclude to nonlinear asymptotic stability with algebraic decay rates. 

The only question left concerning case (v) is whether also holds convective exponential spectral stability. In this case the only obstacle to exponential spectral stability without weight is the presence of two curves of essential spectrum passing through $\lambda=0$ tangentially to the imaginary axis, one for each spatial infinity. Recall that since, in case (v), $H_R>H_s$ and $H_L>H_s$, there holds for $h=H_L$, $H_R$,
\begin{align*}
\Re(\gamma_{-,h}(\lambda))&>0>\Re(\gamma_{+,h}(\lambda))\,,&
\textrm{ when }\Re(\lambda)\gg 1\,.
\end{align*}
Therefore, to conclude convective exponential spectral stability, one needs only to check that curves of essential spectrum near $\lambda=0$ are due to changes of sign of $\Re(\gamma_{+,H_L}(\lambda))$ and $\Re(\gamma_{-,H_R}(\lambda))$. Since 
\[
\gamma_{\pm,h}(0)
:=\frac{1}{\left(c-\sqrt{h}\right)^2-\frac{h}{F^2}}
\,\left(-\left(\frac32-\frac{c}{\sqrt{h}}\right)
\pm\left|\frac32-\frac{c}{\sqrt{h}}\right|\right)
\] 
one concludes convective exponential spectral stability in case (v) from the fact that when $\nu>1$
\begin{align*}
\frac32-\frac{c}{\sqrt{H_R}}&<0\,,&
\frac32-\frac{c}{\sqrt{H_L}}&>0\,.
\end{align*}

In turn, in case (i), $H_R<H_s$ and $H_L<H_s$ so that for $h=H_L$, $H_R$,
\begin{align*}
\Re(\gamma_{\pm,h}(\lambda))&>0\,,&
\textrm{ when }\Re(\lambda)\gg 1\,.
\end{align*}
Therefore to prove that convective spectral stability fails it is sufficient to prove that a spectral instability is caused by what happens near $-\infty$, thus with $h=H_L$. This follows from the fact that in case (i), $F>2$ hence both endstates generate an essential spectrum instability.

\subsection{Discontinuous fronts}

We now specialize to discontinuous fronts, as in cases (ii), (iii) and (iv) of Proposition~~\ref{existprop}. Our goal in the present section is to completely elucidate the effect of essential spectrum on stability/instability of any type so as to reduce the issues to the examination of unstable eigenvalues, carried out in the next section.

For all the cases under consideration here, $H_R<H_s<H_L$ thus
\begin{align*}
\Re(\gamma_{-,H_L}(\lambda))&>0>\Re(\gamma_{+,H_L}(\lambda))\,,&
\Re(\gamma_{\pm,H_R}(\lambda))&>0\,,&
\textrm{ when }\Re(\lambda)\gg 1\,.
\end{align*}
This readily implies that instabilities due to the behavior near $+\infty$ may always be convectively stabilized whereas the convective stabilization of instabilities due to the behavior near $-\infty$ require that those occur through a change of sign in $\Re(\gamma_{+,H_L}(\lambda))$. Note that at this stage it is not clear whether the latter necessary condition is also sufficient.

In order to decide this necessary condition, we compute that
\begin{align*}
\gamma_{\pm,h}(\lambda)
\,\stackrel{|\lambda|\to\infty}{=}\,\frac{\lambda+\frac{1}{\sqrt{h}}\left(1\mp\frac{F}{2}\right)}{c-\sqrt{h}\mp\frac{\sqrt{h}}{F}}
+\cO(|\lambda|^{-1})\,.
\end{align*}
As a consequence, 
\begin{align*}
\liminf_{\substack{|\lambda|\to\infty\\\Re(\lambda)\geq0}}\Re(\gamma_{-,H_L}(\lambda))
&=\frac{1}{H_L}
\frac{1+\frac{F}{2}}{\frac{c}{\sqrt{H_L}}-1+\frac1F}\,,&
\limsup_{\substack{|\lambda|\to\infty\\\Re(\lambda)\geq0}}\Re(\gamma_{+,H_L}(\lambda))
&=\frac{1}{H_L}
\frac{1-\frac{F}{2}}{\frac{c}{\sqrt{H_L}}-1-\frac1F}\,.
\end{align*}
In particular, the condition is at least met in the high-frequency regime. Another necessary condition is that $\Re(\lambda)>0$ implies $\Re(\gamma_{+,H_L}(\lambda))<\Re(\gamma_{-,H_L}(\lambda))$. We have already examined the latter condition when discussing absolute instability and proved that it fails only in case (ii) when 
\[
\sqrt{2\nu(\nu+1)}<F<\nu(\nu+1)\,.
\]
Moreover when $F<\sqrt{2\nu(\nu+1)}$, $\Re(\lambda)\geq0$ also implies $\Re(\gamma_{+,H_L}(\lambda))<\Re(\gamma_{-,H_L}(\lambda))$. 

The full condition we want to elucidate is 
\begin{align*}
\inf_{\Re(\lambda)\geq0}\Re(\gamma_{-,H_L}(\lambda))&>0&\textrm{and}&&
\sup_{\Re(\lambda)\geq0}\Re(\gamma_{+,H_L}(\lambda))&<\inf_{\Re(\lambda)\geq0}\Re(\gamma_{-,H_L}(\lambda))\,.
\end{align*}
With explicit expressions~\eqref{eq:gamma} in mind, we first recall that
\[
\cQ_h(\lambda)=-\Im(\lambda)^2\frac{h}{F^2}+\cQ_h(\Re(\lambda))
+\mbi\Im(\lambda)\,\left(2\Re(\lambda)\,\frac{h}{F^2}-(c-\sqrt{h})+\frac{2\sqrt{h}}{F^2}\right)
\]
and observe that in present cases, when $\Re(\lambda)\geq0$,
\begin{align*}
2\Re(\lambda)\,\frac{H_L}{F^2}-(c-\sqrt{H_L})+\frac{2\sqrt{H_L}}{F^2}
&\geq -(c-\sqrt{H_L})+\frac{2\sqrt{H_L}}{F^2}>0\,,&\\
\cQ_{H_L}(\Re(\lambda))&\geq\left(\frac32-\frac{c}{\sqrt{h}}\right)^2>0\,.&
\end{align*}

This motivates the following lemma.

\begin{lemma}\label{l:sqrt}
For any positive $\alpha$, $\gamma$,
\begin{align*}
\inf_{y\in\R}\Re(\sqrt{-y^2\,\alpha+\mbi\,y\,\beta+\gamma})
&\,=\,\min\left(\left\{\,\sqrt{\gamma}\,;\,\frac{|\beta|}{2\sqrt{\alpha}}\,\right\}\right)\,,\\
\sup_{y\in\R}\Re(\sqrt{-y^2\,\alpha+\mbi\,y\,\beta+\gamma})
&\,=\,\max\left(\left\{\,\sqrt{\gamma}\,;\,\frac{|\beta|}{2\sqrt{\alpha}}\,\right\}\right)\,.
\end{align*}
\end{lemma}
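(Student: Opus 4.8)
The plan is to reduce this two-sided extremization to a one-variable monotonicity problem, exploiting that the integrand depends on $y$ only through $y^2$. Writing $z(y):=-\alpha y^2+\mbi\beta y+\gamma$ and using the principal branch, I would start from the elementary identity $\Re(\sqrt z)=\sqrt{\tfrac12(|z|+\Re(z))}$, which holds for every $z$ (including negative reals, where both sides vanish). Since $\Re(z)=\gamma-\alpha y^2$ and $|z|^2=(\gamma-\alpha y^2)^2+\beta^2 y^2$ are both even functions of $y$ depending only on $t:=y^2\in[0,\infty)$, the problem collapses to studying
\[
g(t):=\tfrac12\Big(\sqrt{P(t)}+\gamma-\alpha t\Big),\qquad P(t):=\alpha^2 t^2+(\beta^2-2\alpha\gamma)t+\gamma^2=|z|^2,
\]
on $t\ge 0$, with $\Re(\sqrt z)=\sqrt{g(t)}$. (One checks that $P>0$ on $[0,\infty)$, its roots being either complex or negative.) It therefore suffices to prove $g$ monotone on $[0,\infty)$ and to identify its endpoint values.

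Next I would record those endpoint values, which turn out to be exactly the two candidates in the statement: $g(0)=\gamma$, giving $\Re(\sqrt z)=\sqrt\gamma$ at $y=0$; while the expansion $\sqrt{P(t)}=\alpha t+\tfrac{\beta^2-2\alpha\gamma}{2\alpha}+o(1)$ as $t\to\infty$ yields $g(t)\to\frac{\beta^2}{4\alpha}$, i.e. $\Re(\sqrt z)\to\frac{|\beta|}{2\sqrt\alpha}$. Thus the whole content of the lemma is that $g$ has no interior extremum, so that its infimum and supremum are these two boundary/limit values.

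The crux is the monotonicity. Differentiating, $\sign\big(g'(t)\big)=\sign\big(P'(t)-2\alpha\sqrt{P(t)}\big)$, and the engine of the argument is the algebraic identity
\[
P'(t)^2-4\alpha^2 P(t)=(\beta^2-2\alpha\gamma)^2-4\alpha^2\gamma^2=\beta^2(\beta^2-4\alpha\gamma),
\]
which is \emph{constant} in $t$. Hence the comparison of $\sqrt\gamma$ with $\tfrac{|\beta|}{2\sqrt\alpha}$, i.e. the sign of $\beta^2-4\alpha\gamma$, governs everything. When $\beta^2<4\alpha\gamma$ (including $\beta=0$) the identity gives $|P'(t)|<2\alpha\sqrt{P(t)}$, hence $P'(t)-2\alpha\sqrt{P(t)}<0$ and $g$ is strictly decreasing, so $\sup\Re(\sqrt z)=\sqrt\gamma$ and $\inf\Re(\sqrt z)=\tfrac{|\beta|}{2\sqrt\alpha}$, matching the claimed $\max$/$\min$. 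When $\beta^2>4\alpha\gamma$ one gets $g$ strictly increasing and the roles of inf and sup are exchanged, again matching. The boundary case $\beta^2=4\alpha\gamma$ forces $g$ constant with $\sqrt\gamma=\tfrac{|\beta|}{2\sqrt\alpha}$.

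The only place demanding genuine care — which I expect to be the main (if minor) obstacle — is passing from the squared identity to the \emph{sign} of $P'(t)-2\alpha\sqrt{P(t)}$, since squaring loses sign information. In the decreasing case this is automatic because $2\alpha\sqrt{P}\ge 0$ forces $P'-2\alpha\sqrt P<0$ from $|P'|<2\alpha\sqrt P$. In the increasing case, however, $|P'|>2\alpha\sqrt P$ leaves two branches open, and I would close the argument by the independent observation that $\beta^2>4\alpha\gamma\ge 2\alpha\gamma$ gives $P'(0)=\beta^2-2\alpha\gamma>0$, so that $P'$, being linear and increasing, stays positive throughout $[0,\infty)$ and thus $P'>2\alpha\sqrt P$. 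Everything else is routine bookkeeping.
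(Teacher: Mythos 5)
Your proof is correct in substance and follows essentially the same route as the paper: the identity $(\Re(\sqrt{z}))^2=(\Re(z)+|z|)/2$, the reduction to $t=y^2$, the endpoint values $\gamma$ and $\beta^2/(4\alpha)$, and monotonicity in between. Your constant-discriminant identity $P'(t)^2-4\alpha^2P(t)=\beta^2(\beta^2-4\alpha\gamma)$ is precisely what underlies the paper's terser assertion that $\Gamma'$ either vanishes identically (when $\beta^2=4\alpha\gamma$) or never vanishes; you simply make the sign bookkeeping explicit, which is a genuine improvement since squaring loses signs and the paper leaves that step to the reader. The one inaccuracy is the subcase $\beta=0$, which you explicitly fold into the strict-decrease case: there $P(t)=(\gamma-\alpha t)^2$ has the nonnegative double root $t=\gamma/\alpha$ (contradicting your parenthetical that the roots are complex or negative), the identity yields $|P'|=2\alpha\sqrt{P}$ rather than a strict inequality, and $g(t)=\max(\gamma-\alpha t,0)$ is only non-increasing, and not differentiable at $t=\gamma/\alpha$. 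This is harmless: $g$ still runs monotonically from $\gamma$ down to $0=\beta^2/(4\alpha)$, so the stated infimum and supremum are unaffected, and for $\beta=0$ the lemma is in any case immediate since $z$ is then real; it is worth noting that the paper's own proof has the same blind spot, as for $\beta=0$ its $\Gamma'$ neither vanishes identically nor is everywhere nonzero.
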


\begin{proof}
From the classical formula $(\Re(\sqrt{z}))^2=(\Re(z)+|z|)/2$, we deduce
\[
\left(\Re(\sqrt{-y^2\,\alpha+\mbi\,y\,\beta+\gamma})\right)^2
\,=\,\frac12\left(
\gamma-y^2\,\alpha+\sqrt{(\gamma-y^2\,\alpha)^2+y^2\beta^2}
\right)=:\Gamma(y^2)\,.
\]
Direct computations yield that
\begin{align*}
\Gamma(0)&=\gamma\,,&\lim_{+\infty}\Gamma =\frac{\beta^2}{4\alpha}
\end{align*}
and that either $\Gamma'$ is constantly zero, which happens when $\beta^2=4\alpha\gamma$, or that it never vanishes, when $\beta^2\neq 4\alpha\gamma$. Hence the result by monotony. 
\end{proof}

When applying the lemma to an estimate of $\Re(\cQ_h(\lambda))$, we want to determine what is the minimum obtained from the lemma. This stems from the following computation
\begin{align*}
4\,\frac{H_L}{F^2}\,&\cQ_{H_L}(\Re(\lambda))
-\left(2\Re(\lambda)\,\frac{H_L}{F^2}-(c-\sqrt{H_L})+\frac{2\sqrt{H_L}}{F^2}\right)^2\\
&=4\,\frac{H_L}{F^2}\,\left(\frac32-\frac{c}{\sqrt{H_L}}\right)^2
-\left(-(c-\sqrt{H_L})+\frac{2\sqrt{H_L}}{F^2}\right)^2\\
&=\frac{F-2}{F}\left(\frac{\sqrt{H_L}}{F}+c-\sqrt{H_L}\right)
\left(
\frac{2\sqrt{H_L}}{F}\,\left(\frac32-\frac{c}{\sqrt{H_L}}\right)
-(c-\sqrt{H_L})+\frac{2\sqrt{H_L}}{F^2}\right)\,.
\end{align*}
Note that the latter expression does not depend on $\lambda$ and that its sign is determined by the sign of $F-2$. All together we deduce that, to determine the convective stabilitization of the essential spectrum, when $F\geq 2$ it is sufficient to discuss what happens at the limit $\Im(\lambda)\to\infty$ whereas when $F<2$ it is sufficient to look at the case when $\lambda\in\R$.

As for the smooth profiles, the case $F<2$ has been thoroughly analyzed in \cite{SYZ} and the only thing left is to check that one may also obtain exponential convective stability in this case. This follows from the same computation as for smooth profiles. 

From now on we focus on discontinuous profiles when $F\geq2$. In this context it follows from the previous lemma and the above $|\lambda|\to\infty$ asymptotics that failure of 
convective stability by essential spectrum is equivalent to
\begin{align*}
\gamma_-^\infty:=\frac{1}{H_L}
\frac{1+\frac{F}{2}}{\frac{c}{\sqrt{H_L}}-1+\frac1F}
&<
\frac{1}{H_L}
\frac{\frac{F}{2}-1}{-\frac{c}{\sqrt{H_L}}+1+\frac1F}
=:\gamma_+^\infty\,.
\end{align*}
This coincides with the condition for absolute instability
\[
F>\sqrt{2\nu(\nu+1)}\,.
\]

\br
Let us emphasize that the coincidence of the boundary of absolute instability with the boundary of convective stability\footnote{For the moment we have only proved stabilization of the essential spectrum but we do prove full stability in the end.} defined through scalar exponential weights is not a general fact but a specific property of the present problem. It comes with the strong consequence that there is no need to consider more general weights. One may obtain a simple (but artificial) counterexample to a more general claim in this direction by simply considering as a single system two uncoupled systems requiring incompatible weights.
\er

\subsection{Summary} 

\begin{figure}[t!]
\begin{center}
\includegraphics[scale=0.32]{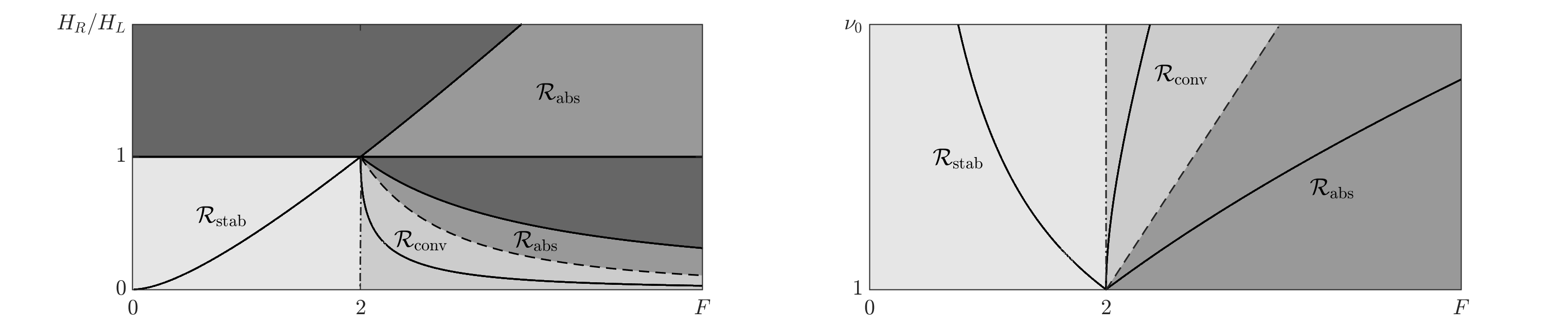}
\end{center}
\caption{Stability regimes on the domain of existence Figure \ref{existence:fig}. In region $\pazocal{R}_\mathrm{stab}$ waves are stable in the unweighted space,  while in $\pazocal{R}_\mathrm{conv}$ waves are convectively stable in the $(\eta_L,\eta_R)$-weighted. Finally, in region $\pazocal{R}_\mathrm{abs}$ waves are absolutely unstable. The boundary of the regions $\pazocal{R}_\mathrm{stab}$ and $\pazocal{R}_\mathrm{conv}$ delimited by the dash-dotted line corresponds to $F=2$ while the boundary of the regions $\pazocal{R}_\mathrm{conv}$ and $\pazocal{R}_\mathrm{abs}$ delimited by the dashed line corresponds to $F=\sqrt{2\nu(\nu+1)}$ with $\nu=\sqrt{H_L/H_R}>1$.}
\label{stability:fig}
\end{figure}

The preceding analysis motivates the definition of the following regions in parameters space. We set
\bqs
\pazocal{R}_\mathrm{stab}:=\left\{ \nu>1 \text{ and } 0<F<2 \text{ with } F\neq \frac{\nu+1}{\nu^2} \right\}, \quad \pazocal{R}_\mathrm{conv}:=\left\{ \nu>1 \text{ and } 2\leq F<\sqrt{2\nu(\nu+1)} \right\},
\eqs
together with
\bqs
\pazocal{R}_\mathrm{abs}:=\left\{ \nu>1 \text{ and } \sqrt{2\nu(\nu+1)}<F<\nu(\nu+1)\right\}\cup \left\{ 0<\nu<1 \text{ and } \frac{\nu+1}{\nu^2}<F\right\}.
\eqs
We refer to Figure~\ref{stability:fig} for a visualization of these regions in parameters space.  So far, our results can be summarized as:
\begin{itemize}
\item In region $\pazocal{R}_\mathrm{stab}$, waves of cases (iv)-(v) have marginally stable essential spectrum and convective exponential stabilization of the essential spectrum can always be achieved.  
\item In region $\pazocal{R}_\mathrm{conv}$, waves of cases (ii)-(iii)-(iv) have unstable essential spectrum but they have convectively exponentially stable essential spectrum in some $(\eta_L,\eta_R)$-weighted spaces with $\eta_L\geq 0$ and $\eta_R\leq 0$.
\item In region $\pazocal{R}_\mathrm{abs}$, waves of cases (i)-(ii) have unstable essential spectrum with absolute instability in the sense that the essential spectrum can not be stabilized in any $(\eta_L,\eta_R)$-weighted spaces with, in case (ii), the presence of unstable branch points for the resolvent operator. 
\end{itemize}
Using the results of \cite{SYZ}, in region $\pazocal{R}_\mathrm{stab}$, waves of cases (iv)-(v) are marginally spectrally stable in the sense that the spectrum is included in $\left\{\lambda\in\C;\Re(\lambda)<0\right\}\cup\left\{0\right\}$ with an embedded eigenvalue at $\lambda=0$, of multiplicity one in a generalized sense. As a consequence, there remains to study whether when $2\leq F<\sqrt{2\nu(\nu+1)}$ and $\nu>1$, that is in region $\pazocal{R}_\mathrm{conv}$, there is a choice of $\eta_L\in (\gamma_+^\infty,\gamma_-^\infty)$ such that when $\eta_R$ is sufficiently negative there is no $\lambda$ with $\Re(\lambda)\geq0$ possessing an eigenfunction in a $(\eta_L,\eta_R)$-weighted space. This is the object of the next section. As a preliminary we observe that by taking $\eta_R$ sufficiently negative we may readily discard eigenfunctions that are not zero on $\R_+$.

\subsection{Maximal decay rate: another view}

Before moving on with the rest of the program, we would like to halt and offer a different perspective on the former computations so as to address the following question: what is the maximal essential spectral gap that may be opened by tuning our weights appropriately ? Since the boundaries of the essential spectrum due to what happens near $+\infty$ may be pushed arbitrarily to the left of the compex plane, we may again focus on the contribution from the left. The same computation we have carried out to determine absolute instability yields as an upper bound for the essential spectral gap
\[
\theta_{opt}\,:=\,
\frac{F^2}{2H_L}\left(-(c-\sqrt{H_L})+\frac{2\sqrt{H_L}}{F^2}\right)\,,
\]
and that it is reached with spatial decay rate
\[
\eta_{opt}:=\frac{1}{\left(c-\sqrt{H_L}\right)^2-\frac{H_L}{F^2}}
\,\left(-\theta_{opt}\,(c-\sqrt{H_L})-\left(\frac32-\frac{c}{\sqrt{H_L}}\right)\right)
=\frac{F^2}{2\,H_L}\,.
\]
Reciprocally one may check with arguments similar to the ones used above (mostly relying on Lemma~\ref{l:sqrt}) that choosing $\eta_L=\eta_{opt}$ and $\eta_R$ sufficiently negative provides the optimal spectral gap. 

The effect of moving $\eta_L$ is illustrated in Figure~\ref{fig:sepctrumL}. There the curves are obtained by solving in $\lambda\in\C$ the equations
\[
\eta_L+\mbi\xi=\gamma_{\pm,H_L}(\lambda)
\]
with parameter $\xi\in\R$ as
\[
\lambda=(\eta_L+\mbi\xi)\left(c-\sqrt{H_L}\right)-\frac{1}{\sqrt{H_L}}
\pm\sqrt{(\eta_L+\mbi\xi)^2\frac{H_L}{F^2}-(\eta_L+\mbi\xi)+\frac{1}{H_L}}
\]
and we have introduced 
\begin{align*}
\eta_L^\mathrm{min}&:=\gamma_+^\infty=\frac{1}{H_L}
\frac{\frac{F}{2}-1}{-\frac{c}{\sqrt{H_L}}+1+\frac1F}\,,&
\eta_L^\mathrm{max}&:=\gamma_-^\infty=\frac{1}{H_L}
\frac{1+\frac{F}{2}}{\frac{c}{\sqrt{H_L}}-1+\frac1F}\,.
\end{align*}

\br\label{follow-up-NL}
To prove the last claim in Remark~\ref{nonlinearrmk}, we observe that 
\[
\eta_L^\infty-\eta_L^\mathrm{max}
\,=\,\frac{(F-2)\,\left(\frac{c}{\sqrt{H_L}}-1+\frac1F\right)}{2\left(\frac{H_L}{F^2}-\left(c-\sqrt{H_L}\right)^2\right)}
\]
is indeed positive in the cases under consideration. To carry out the above computation, we have used that 
\[
\left(H_L - H_R\right)\left(H_L-H_{out}\right)
\,=\,3H_L^2-2c\,(c\,H_L-q_0)
\,=\,3H_L^2-2c\,H_L^{\frac32}\,.
\] 
\er

\begin{figure}[!t]
\subfigure[No weight ($\eta_L=0$).]{\includegraphics[scale=0.32]{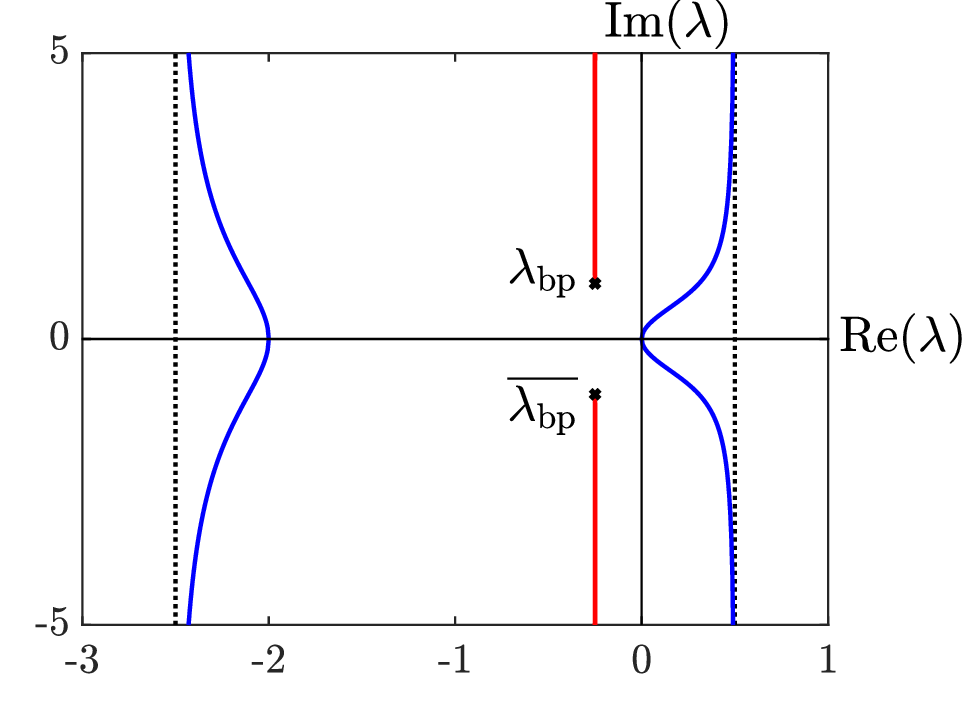}}
\subfigure[$\eta_L\in\left(0,\eta_L^\mathrm{min}\right)$.]{\includegraphics[scale=0.32]{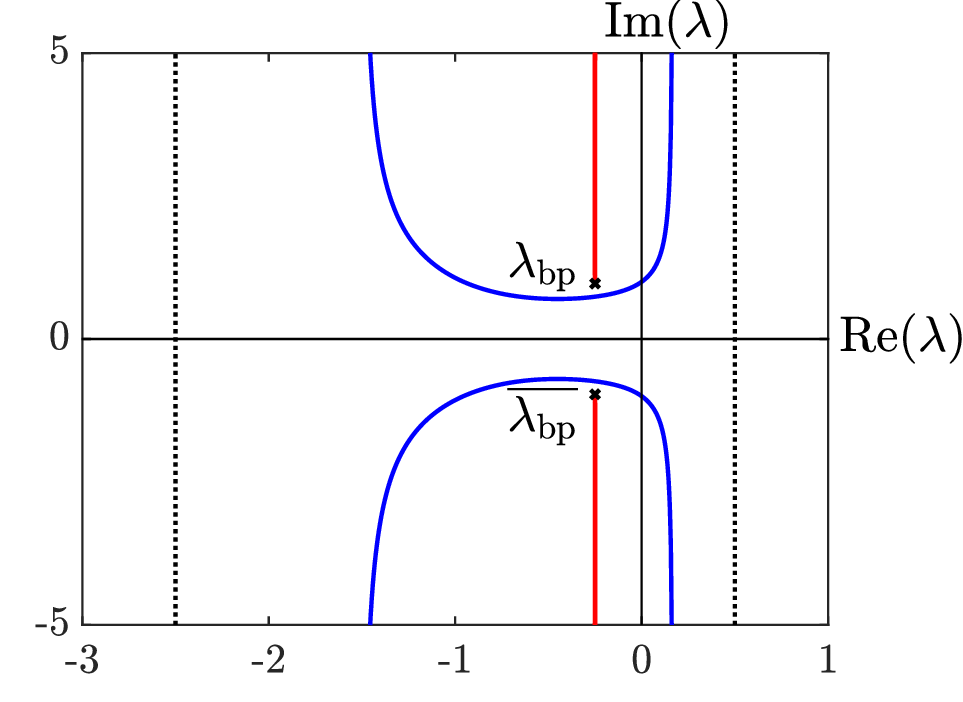}}
\subfigure[$\eta_L=\eta_L^\mathrm{min}$.]{\includegraphics[scale=0.32]{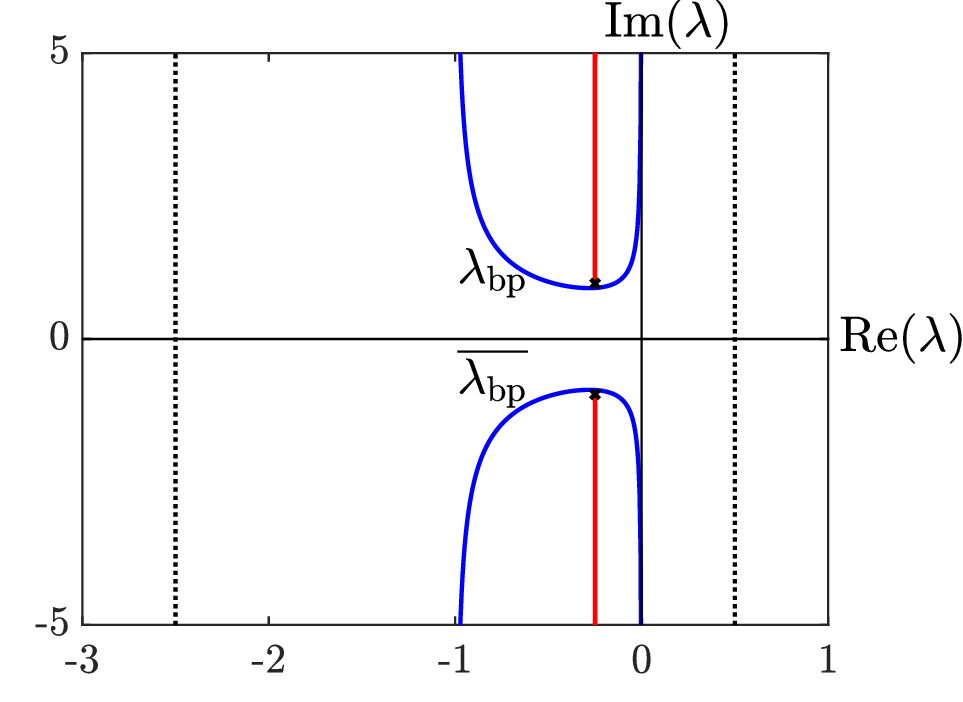}}
\subfigure[$\eta_L\in\left(\eta_L^\mathrm{min},\eta_L^\mathrm{max}\right)$.]{\includegraphics[scale=0.32]{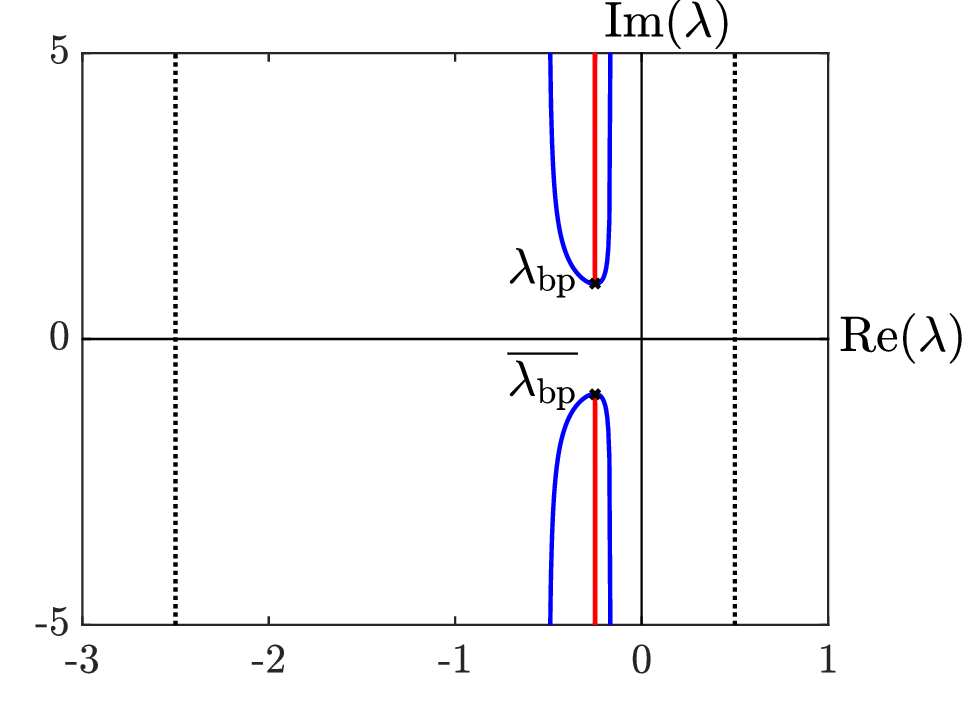}}
\subfigure[$\eta_L=\eta_L^\mathrm{max}$.]{\includegraphics[scale=0.32]{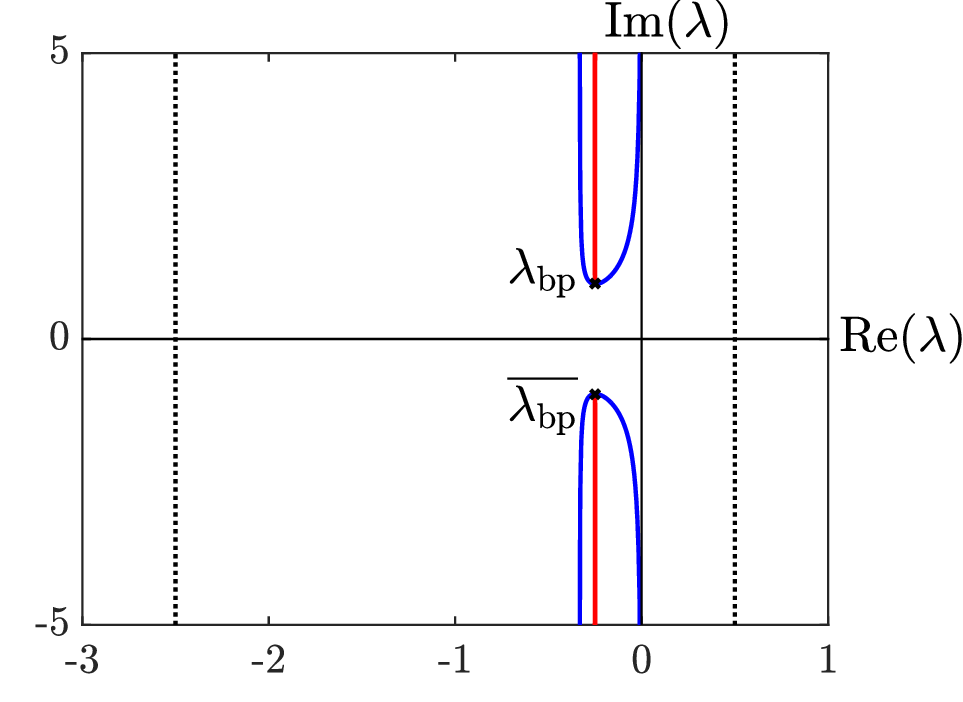}}
\subfigure[$\eta_L>\eta_L^\mathrm{max}$.]{\includegraphics[scale=0.32]{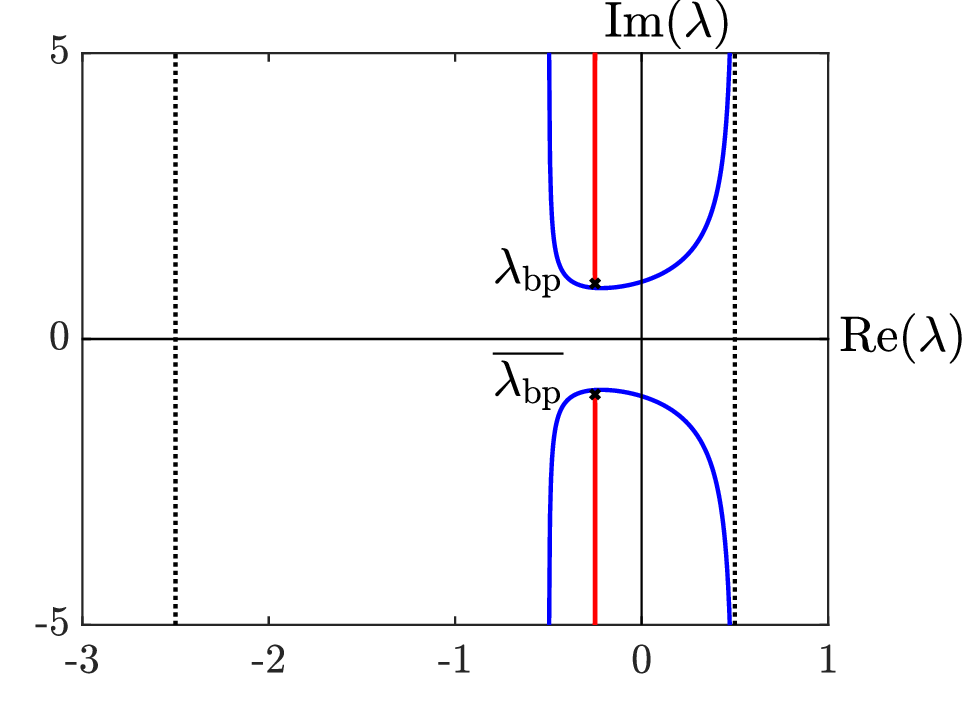}}
\caption{Visualization of some of the boundaries of the weighted essential spectrum as $\eta_L$ is varied. In all figures, the blue curves represent the computed boundaries while the red half-lines 
represent the absolute spectrum which terminates at branch points marked by black crosses. Without weight, the essential spectrum is always unstable, while it is strictly stabilized with a gap for $\eta_L\in\left(\eta_L^\mathrm{min},\eta_L^\mathrm{max}\right)$. At the critical weights $\eta_L=\eta_L^\mathrm{min}$ or $\eta_L=\eta_L^\mathrm{max}$, the weighted essential spectrum is only marginally stabilized with no spectral gap. The parameters are fixed to $(H_L,H_R,F)=(1,1/4,3)$ so that $(\eta_L^\mathrm{min},\eta_L^\mathrm{max})=(3,5)$.}
\label{fig:sepctrumL}
\end{figure}

\section{Sturm--Liouville analysis}\label{s:sl}

Throughout this section, we consider a discontinuous profile and assume that $F$ satisfies
\[
2\leq F<\sqrt{2\nu(\nu+1)}\, \text{ with } \nu>1.
\]
We also fix some $\eta_L\in\left(\eta_L^\mathrm{min},\eta_L^\mathrm{max}\right)$ where $\eta_L^\mathrm{min}$ and $\eta_L^\mathrm{max}$ are as above.

Here, as announced, we study possible unstable eigenvalues and, to do so, adapt the arguments from \cite{SYZ}.

\subsection{The reduced eigenvalue problem}

By imposing a vanishing on $\R_+$, as we can for our purposes, we reduce the eigenfuction problem to finding a nonzero $(v,\psi)$ in\footnote{With obvious notational adaptation for weighted spaces.} $H^1_{\eta_L}(\R_-^*;\C^2)\times \C$ solving
\[
\left\{
\begin{array}{rl}
\lambda v+(A\,v)'&=\ E\,v,\qquad\textrm{on }\R_-^*,\\
\psi\left[\,\lambda\,W-r(W)\right]
&=\ \left[Av\right]\,.
\end{array}
\right.
\]

We begin by inspecting the special case when $v$ is zero (but $\psi$ is not). A direct inspection shows that it only happens when $\left[\,\lambda\,W-r(W)\right]$ is zero, which is equivalent to $\lambda$ and $[r(W)]$ both being zero (since the first component of $[r(W)]$ is zero and the first of $[W]$ is nonzero). The latter occurs exactly when we are in the Riemann shock case, case (iii) of Proposition~\ref{existprop}. Actually the vanishing of $\left[\,\lambda\,W-r(W)\right]$ when $\lambda=0$ alter many of the considerations to come. For this reason we postpone the treatment of the Riemann shock case to the end of the present section. 

Since we are now excluding the Riemann shock case, $\left[\,\lambda\,W-r(W)\right]$ is non zero and one may eliminate $\psi$ to reduce the discussion further to the existence of a nonzero $v=(v_1,v_2)$ in $H^1_{\eta_L}(\R_-^*;\C^2)$ such that on $\R_-^*$
\[
\left\{
\begin{array}{l}
\lambda\,v_1+(-cv_1+v_2)'=0,\\
\lambda\,v_2+\left(\left(-\frac{Q^2}{H^2}+\frac{H}{F^2}\right)\,v_1+\left(-c+2\frac{Q}{H}\right) \,v_2\right)'=\ \left(1+2\frac{Q^2}{H^3}\right)v_1-2\frac{Q}{H^2}\,v_2\,,
\end{array}
\right.
\]
and 
\[
(-cv_1+v_2)(0^-)\times\left[\lambda Q-\left(H-\frac{Q^2}{H^2}\right)\right]
-\left(\left(-\frac{Q^2}{H^2}+\frac{H}{F^2}\right)\,v_1+\left(-c+2\frac{Q}{H}\right) \,v_2\right)(0^-)\times \left[\lambda H\right]\,=\,0\,.
\]

For the sake of writing simplification we introduce one flux coordinate and replace $v$ with $u=(u_1,u_2):=(v_1,-cv_1+v_2)$. With this change, we turn the problem into finding a nonzero $u$ in $H^1_{\eta_L}(\R_-^*;\C^2)$ such that on $\R_-^*$
\[
\left\{
\begin{array}{l}
\lambda\,u_1+u_2'=0,\\
(au_1)'=\left(1-2\left(c-\frac{Q}{H}\right)\frac{Q}{H^2}-2\lambda\,\left(c-\frac{Q}{H}\right)\right)u_1-\left(\lambda+2\frac{Q}{H^2}+2\left(\frac{Q}{H}\right)'\right)\,u_2\,,
\end{array}
\right.
\]
and 
\[
u_2(0^-)\times\left[\lambda Q-\left(H-\frac{Q^2}{H^2}\right)\right]
-\left(a\,u_1+\left(-c+2\frac{Q}{H}\right) \,u_2\right)(0^-)\times \left[\lambda H\right]\,=\,0\,.
\]
In the foregoing we have denoted by $a$ the characteristic determinant
\[
a:=
\frac{H}{F^2}-\left(c-\frac{Q}{H}\right)^2\,.
\]

Let us now examine the possibility to have a nonzero solution $u$ with zero component $u_2$. A direct inspection shows that this may happen only when $\lambda=0$ and that the corresponding $u$ is necessarily a multiple of $(H',0)$. Note that reciprocally one checks readily that when $\lambda=0$ necessarily $u_2\equiv0$. Thus this situation corresponds exactly to the possibility of $0$ being in the spectrum due to translational invariance.

We now focus on the case when $u_2$ is not zero. Then the eigenvalue problem may be recasted into the problem of finding a nonzero $u_2$ in $H^2_{\eta_L}(\R_-^*;\C)$ solving
\[
(a\,u_2')'=\left(1-2\left(c-\frac{Q}{H}\right)\frac{Q}{H^2}-2\lambda\,\left(c-\frac{Q}{H}\right)\right)u_2'+\left(\lambda+2\frac{Q}{H^2}+2\left(\frac{Q}{H}\right)'\right)\,\lambda\,u_2\,,
\]
on $\R_-^*$ and 
\[
u_2(0^-)\times\left[\lambda Q-\left(H-\frac{Q^2}{H^2}\right)\right]
-\left(-a\,u_2'+\left(-c+2\frac{Q}{H}\right)\,\lambda\,u_2\right)(0^-)\times \left[ H\right]\,=\,0\,.
\]
In order to match notation from \cite{SYZ} we introduce
\begin{align*}
f_1&:=\frac{2}{a}\left(c-\frac{Q}{H}\right),&
f_2&:=-\frac{1}{a}\left(1-2\left(c-\frac{Q}{H}\right)\frac{Q}{H^2}-a'\right),\\
f_3&:=-\frac{1}{a},&f_4&:=-\frac{2}{a}
\left(\frac{Q}{H^2}+\left(\frac{Q}{H}\right)'\right)\,,
\end{align*}
so that the equation on $\R_-^*$ becomes
\begin{align*}
u_2''+\left(f_1\lambda+f_2\right)u_2'+\lambda\,\left(f_3\lambda+f_4\right)\,u_2\,=\,0\,.
\end{align*}
We point out for later use that from the fact that $(H',0)$ solves the interior ODE problem for $(u_1,u_2)$ when $\lambda=0$, one deduces that $H''=-f_2\,H'$.

At last, in order to symmetrize the interior equation, we perform a Liouville-type transformation and replace $u_2$ with $w$ defined by\footnote{It should not be confused with the $w$ used in the initial introduction of the spectral problem.}
\[
w(x):=\exp\left(\frac{1}{2}\int_0^{x}(f_1\lambda+f_2)\right)u_2(x)\,.
\]
This replaces the equation on $\R_-^*$ with
\[
w''+\left(\lambda\,\left(f_3\lambda+f_4\right)
-\frac14\left(f_1\lambda+f_2\right)^2-\frac12\left(f_1\lambda+f_2\right)'\right)w=0,
\]
also written as
\be 
\label{weq}
w''+\left(\left(f_3-\frac{1}{4}f_1^2\right)\lambda^2+\left(f_4-\frac{1}{2}f_1f_2-\frac{1}{2}f_1'\right)\lambda-\frac{1}{4}f_2^2-\frac{1}{2}f_2'\right)w=0,
\ee 
which is exactly \cite[Equation~(2.14)]{SYZ}, whereas the boundary condition becomes 
\be
w'(0^-) =(c_1\lambda+c_2)\,w(0^-)\,,
\label{BCw0}
\ee
where
\begin{align*}
c_1&:=\frac12f_1(0^-)-\frac{[Q]}{a(0^-)[H]}
+\frac{1}{a(0^-)}\left(-c+2\frac{Q(0^-)}{H(0^-)}\right)
=\frac{1}{a(0^-)}\left(-c+\frac{Q(0^-)}{H(0^-)}\right)
=-\frac{1}{a(0^-)}\frac{q_0}{H(0^-)}\\
c_2&:=\frac12f_2(0^-)+\frac{\left[H-\frac{Q^2}{H^2}\right]}{a(0^-)[H]}\,.
\end{align*}

Before going on we need to check that $u_2\in H^2_{\eta_L}(\R_-^*;\C)$ implies $w\in H^2(\R_-^*;\C)$. From the analysis of the previous section we know that, when $u_2\in H^2_{\eta_L}(\R_-^*;\C)$, its spatial decay rate is precisely $\Re(\gamma_{-,H_L}(\lambda))$. Therefore this amounts to proving that
\[
\Re(\gamma_{-,H_L}(\lambda))>-\frac12\left(\Re(\lambda)\lim_{-\infty}(f_1)+\lim_{-\infty}(f_2)\right)\,.
\]
A direct computation shows that this is equivalent to
\[
\Re\left(\sqrt{\cQ_{H_L}(\lambda)}\right)>0\,,
\]
thus to the fact that $\lambda$ does not belong to the absolute spectrum.

Therefore it is indeed sufficient to discard the possibility of a nonzero $w$ in $H^2(\R_-^*;\C)$ solving \eqref{weq}--\eqref{BCw0}. 

\subsection{Non real growth rates}\label{s:nonreal}

We stress that whereas the interior part, \eqref{weq}, is symmetric on functions compactly supported in $\R_-^*$, completing it with boundary condition \eqref{BCw0} does not yield a symmetric operator. An argument, specialized to the case at hand, is thus needed to show that necessarily $\lambda\in\R$ if such a $w$ exists. We provide such a concrete argument now.

To begin with, we observe that, combined with \eqref{BCw0}, multiplying \eqref{weq} with $\overline{w}$ and integrating yield
\be
\begin{split}
(c_1\lambda&+c_2)\left|w(0^-)\right|^2-\int_{\R_-^*}\left|w'\right|^2\\
&+ \int_{\R_-^*} |w|^2\left(\left(f_3-\frac{1}{4}f_1^2\right)\lambda^2+\left(f_4-\frac{1}{2}f_1f_2-\frac{1}{2}f_1'\right)\lambda-\frac{1}{4}f_2^2-\frac{1}{2}f_2'\right)\,=\,0.
\end{split}
\label{eigvalpbm}
\ee
When $\Im(\lambda)\neq0$, the imaginary part of \eqref{eigvalpbm} gives
\[
c_1\left|w(0^-)\right|^2
+\,\int_{\R_-^*} |w|^2\left(\left(f_3-\frac{1}{4}f_1^2\right)2\Re(\lambda)+\left(f_4-\frac{1}{2}f_1f_2-\frac{1}{2}f_1'\right)\right)\,=\,0.
\]

Since $c_1<0$ and $f_3<0$, the last equality implies
\[
\Re(\lambda)<-\frac{\inf_{\R_-^*}\left(-f_4+\frac{1}{2}f_1f_2+\frac{1}{2}f_1'\right)}{\inf_{\R_-^*}\left(|f_3|+\frac{1}{4}f_1^2\right)}\,.
\]

As a consequence, we need to study the sign of $f_4-\frac{1}{2}f_1f_2-\frac{1}{2}f_1'$. To determine this sign we observe that
\begin{align*}
a^2\,H^4\,\left(f_4-\frac{1}{2}f_1f_2-\frac{1}{2}f_1'\right)
&=-\frac{2c}{F^2}H^4+\frac{2q_0^2}{F^2}H^3+c^2q_0H^2-2cq_0^2H+q_0^3
=-\frac{2Q}{F^2} \cZ(H)\,.
\end{align*}
with 
\[
\cZ(h):=h^3-\frac{1}{2}F^2cq_0h+\frac12F^2q_0^2\,.
\]
Let us denote $H_c$ the positive root of $\cZ'$, that is,
\[
H_c:=\frac{F\,\sqrt{c\,q_0}}{\sqrt{6}}
=\frac{F\,H_R\,\nu\,\sqrt{\nu^2+\nu+1}}{\sqrt{6}(\nu+1)}\,.
\]
On $[H_c,+\infty)$, $\cZ$ is increasing.

We directly borrow from \cite[Section~4.1]{SYZ} that when $F>(\nu+1)/\nu^2$, one has $H_*>H_c$ and, when moreover $H_*\leq H_L$, $\cZ(H_*)>0$. This directly implies that in cases (iii) and (iv) of Proposition~\ref{existprop}, indeed $\inf_{\R_-^*}\left(-f_4+\frac{1}{2}f_1f_2+\frac{1}{2}f_1'\right)>0$.

To complete the analysis of non real eigenvalues, we only need to show that in case (ii), $H_L>H_c$ and $\cZ(H_L)>0$. It is straightforward to check that when $F\leq \sqrt{2\nu(\nu+1)}$ indeed $H_L>H_c$, whereas $\cZ(H_L)>0$ is exactly equivalent to $F< \sqrt{2\nu(\nu+1)}$.

This achieves the proof that a spectral gap is present for non real eigenvalues.

\subsection{Real growth rates}\label{s:real}

We now turn our attention to the case of real eigenvalues. Throughout the present subsection, we assume that $\lambda\in\R_+$ and our goal is again to rule out the possibility of a nonzero $w$ in $H^2(\R_-^*;\C)$ solving \eqref{weq}--\eqref{BCw0}.

Our starting point is again Equation \eqref{eigvalpbm}, that we write now as
\[
\cB(w,w)\,=\,-\lambda\,\cA_\lambda(w,w)
\]
where $\cA_\lambda$ and $\cB$ are the symmetric sesquilinear\footnote{Consistently with our convention for scalar products, they are linear in their second factors.} forms on $H^1(\R_-^*;\C)$ defined through their quadratic forms
\begin{align*}
\cA_\lambda(v,v)&:=-c_1\left|v(0^-)\right|^2
+\,\int_{\R_-^*} |v|^2\left(\left(-f_3+\frac{1}{4}f_1^2\right)\lambda+\left(-f_4+\frac{1}{2}f_1f_2+\frac{1}{2}f_1'\right)\right)\,,\\
\cB(v,v)&:=-c_2\left|v(0^-)\right|^2
+\,\int_{\R_-^*} |v'|^2
+\,\int_{\R_-^*} |v|^2\left(\frac{1}{4}f_2^2+\frac{1}{2}f_2'\right)\,.
\end{align*}
Note that since $\lambda\in\R_+$, the analysis of the former subsection yields that $\cA_\lambda$ is positive definite when $F< \sqrt{2\nu(\nu+1)}$. In order to conclude it is therefore sufficient to prove that $\cB$ is also positive definite, that is, 
\begin{align*}
&0<\inf_{\substack{v\in H^1\\v\not\equiv0}}\frac{\cB(v,v)}{\|v\|_{H^1}^2}\,,&
\textrm{or equivalently}&&
&0<\inf_{\substack{v\in H^1\\v\not\equiv0}}\frac{\cB(v,v)}{\|v\|_{L^2}^2}\,.
\end{align*}
The equivalence between the two conditions follows from the following G\aa rding-type inequality: there exist positive $c$ and $C$ such that, for any $v\in H^1$, $\cB(v,v)\geq c\|v\|_{H^1}^2-C\|v\|_{L^2}^2$. A refined version of the latter is proved below.

As in \cite{SYZ}, we prove the latter by a continuity/homotopy argument. To set it, we introduce, for $x_0\in\R_-$, $\cB_{x_0}$ the symmetric sesquilinear form on $H^1(\R_-^*;\C)$ defined through its quadratic form
\begin{align*}
\cB_{x_0}(v,v)&:=-c_2^{[x_0]}\left|v(0^-)\right|^2
+\,\int_{\R_-^*} |v'|^2
+\,\int_{\R_-^*} |v|^2\left(\frac{1}{4}f_2^2+\frac{1}{2}f_2'\right)(\cdot+x_0)\,.
\end{align*}
The explicit definition of $c_2^{[x_0]}$ is given below but let us already anticipate that our choice ensures that $c_2^{[x_0]}$ depends smoothly on $x_0$ and converges as $x_0\to-\infty$ to a negative value. Note moreover that 
\[
\lim_{-\infty} f_2=-\lim_{-\infty} \frac{H''}{H'}<0\,.
\]
This implies that when $x_0$ is sufficiently close to $-\infty$, $\cB_{x_0}$ is positive definite. To motivate the expression for $c_2^{[x_0]}$, we first observe that 
\[
c_2\,=\,\frac12\,f_2(0^-)+\frac{(H(0^-)-H_L)(H(0^-)-H_{out})}{a(0^-)\,H(0^-)^2}\,.
\]
then, consistently we set
\[
c_2^{[x_0]}\,:=\,\frac12\,f_2(x_0)+\frac{(H(x_0)-H_L)(H(x_0)-H_{out})}{a(x_0)\,H(x_0)^2}\,.
\]
Note that as announced
\[
\lim_{x_0\to-\infty}c_2^{[x_0]}\,=\,\frac12\lim_{-\infty} f_2<0\,.
\]

The continuity argument is applied to the continuous function
\begin{align*}
\R_-\to \R\,,\qquad x_0\mapsto 
\inf_{\substack{v\in H^1\\v\not\equiv0}}\frac{\cB_{x_0}(v,v)}{\|v\|_{L^2}^2}\,.
\end{align*}
The fact that the foregoing function is indeed defined follows again from the G\aa rding inequality mentioned above. To complete our study of cases (ii) and (iv), it is sufficient to prove that this function cannot vanish. This follows in a straightforward way from the series of two lemmas stated and proved below.

To prepare the lemmas, we first quantify the possible failure of coercivity. For any
\[
0<\kappa<\frac14(\lim_{-\infty} f_2)^2
\]
there exist positive $\eta_\kappa$, $c_\kappa$ and $C_\kappa$ such that for any $x_0\in\R_-$ and $v\in H^1$,
\[
\cB_{x_0}(v,v)+C_\kappa\,\|v\|_{L^2((-\eta_\kappa,0))}^2
\geq c_\kappa\,\|v\|_{H^1(\R_-^*)}^2
+\kappa\,\|v\|_{L^2((-\infty,-\eta_\kappa))}^2\,.
\]
Indeed $\eta_\kappa$ may be chosen by imposing
\[
\inf_{(-\infty,-\eta_\kappa)}\left(\frac{1}{4}f_2^2+\frac{1}{2}f_2'\right)
>\frac12\left(\kappa+\frac14(\lim_{-\infty} f_2)^2\right)
\]
and the existence of $c_\kappa$ and $C_\kappa$ is a consequence of rough bounds on coefficients and the following Sobolev inequality,
\be\label{eq:Sobolev}
|v(0^-)|^2\leq \|v\|_{L^\infty((-\eta,0))}^2
\lesssim \|v\|_{L^2((-\eta,0))}\,\|v\|_{H^1((-\eta,0))}\,,
\ee
that holds for any $\eta>0$ (with an implicit constant depending on $\eta$).

As a second and last preliminary to lemmas, we find it convenient to explicitly introduce the self-adjoint operator on $L^2(\R_-^*;\C)$, $\cL_{x_0}$, of domain denoted $D_{x_0}$, associated with $\cB_{x_0}$. Explicitly
\begin{align*}
D_{x_0}&\,:=\,
\left\{\,v\in H^1(\R_-^*;\C)\,|\,\cB_{x_0}(v,\cdot)
\quad\textrm{is continuous on }L^2(\R_-^*;\C)\ \,\right\}\\
&\ \,=\,\left\{\,v\in H^2(\R_-^*;\C)\,|\,v'(0^-)=c_2^{[x_0]}\,v(0^-)\,\right\}
\end{align*}
and for $v\in D_{x_0}$,
\[
\cL_{x_0} v\,=\,-v''+v\,\left(\frac{1}{4}f_2^2+\frac{1}{2}f_2'\right)(\cdot+x_0)\,.
\]

\begin{lemma}
If
\[
0=\inf_{\substack{v\in H^1\\v\not\equiv0}}\frac{\cB_{x_0}(v,v)}{\|v\|_{L^2}^2}
\]
then there exists $v\in D_{x_0}$, $v\not\equiv0$, such that $\cL_{x_0}v=0$.
\end{lemma}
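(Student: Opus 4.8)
The plan is to read the hypothesis as the statement that $0$ is the bottom of the spectrum of the self-adjoint operator $\cL_{x_0}$, and to produce an associated eigenfunction by the direct method of the calculus of variations. Concretely, I would fix $\kappa$ with $0<\kappa<\tfrac14(\lim_{-\infty}f_2)^2$ and choose a minimizing sequence $(v_n)\subset H^1(\R_-^*;\C)$ normalized by $\|v_n\|_{L^2}=1$ and with $\cB_{x_0}(v_n,v_n)\to 0$. The refined G\aa rding inequality established just above the lemma gives
\[
c_\kappa\,\|v_n\|_{H^1(\R_-^*)}^2+\kappa\,\|v_n\|_{L^2((-\infty,-\eta_\kappa))}^2
\ \leq\ \cB_{x_0}(v_n,v_n)+C_\kappa\,\|v_n\|_{L^2((-\eta_\kappa,0))}^2
\ \leq\ \cB_{x_0}(v_n,v_n)+C_\kappa\,,
\]
so $(v_n)$ is bounded in $H^1$. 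Passing to a subsequence, $v_n\rightharpoonup v$ weakly in $H^1(\R_-^*;\C)$, whence $v_n\to v$ strongly in $L^2$ on every bounded subinterval of $\R_-$ by the Rellich embedding, and $v_n(0^-)\to v(0^-)$ by compactness of the trace map.

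The main obstacle is to exclude loss of mass at $-\infty$, i.e. to show $v\not\equiv 0$; this is precisely the concentration-compactness difficulty attached to the unbounded domain. I would argue by contradiction: if $v\equiv0$, then $\|v_n\|_{L^2((-\eta_\kappa,0))}\to0$ by the local strong convergence, and feeding this into the displayed inequality together with $\cB_{x_0}(v_n,v_n)\to0$ forces $\|v_n\|_{L^2((-\infty,-\eta_\kappa))}\to0$ as well, so that $\|v_n\|_{L^2}\to0$, contradicting the normalization. Hence $v\not\equiv0$. The crucial input here is that $\tfrac14 f_2^2+\tfrac12 f_2'$ tends at $-\infty$ to the strictly positive constant $\tfrac14(\lim_{-\infty}f_2)^2$, which is exactly what makes the tail coercive and prevents escape of mass; this is why the coercivity threshold enters the choice of $\kappa$.

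It then remains to verify that $\cB_{x_0}$ is weakly lower semicontinuous along $(v_n)$. I would split the potential $\tfrac14 f_2^2+\tfrac12 f_2'$ into a part that is nonnegative outside a bounded interval and a remainder supported in a bounded interval: the remainder contribution passes to the limit by the local strong $L^2$ convergence, the boundary term $-c_2^{[x_0]}|v(0^-)|^2$ passes to the limit by the trace convergence, and both the nonnegative far-field part (by Fatou) and $\int_{\R_-^*}|v'|^2$ (by weak lower semicontinuity of the $L^2$ norm) satisfy the $\liminf$ inequality. This yields $\cB_{x_0}(v,v)\leq\liminf_n\cB_{x_0}(v_n,v_n)=0$, while $\cB_{x_0}(v,v)\geq0$ since $0$ is the infimum of the Rayleigh quotient and $v\not\equiv0$. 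Therefore $\cB_{x_0}(v,v)=0$ and $v$ realizes the infimum.

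Finally, since $v$ minimizes $\cB_{x_0}(\cdot,\cdot)$ subject to $\|\cdot\|_{L^2}^2=1$ with minimal value $0$, the standard Euler--Lagrange argument for the closed, bounded-below quadratic form $\cB_{x_0}$ (equivalently, the variational characterization of the bottom of the spectrum of $\cL_{x_0}$, which lies strictly below the essential spectrum $[\tfrac14(\lim_{-\infty}f_2)^2,+\infty)$) places the minimizer in $D_{x_0}$ and gives $\cL_{x_0}v=0$. This $v$ is the claimed nonzero element of $D_{x_0}$, completing the proof.
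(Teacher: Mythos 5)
Your proposal is correct and takes essentially the same route as the paper's proof: a direct-method minimizing sequence bounded via the G\aa rding estimate, weak lower semicontinuity obtained by splitting the potential into an eventually nonnegative far-field part and a compactly supported remainder (with local compactness and the trace/Sobolev bound handling the boundary term), the refined G\aa rding estimate ruling out loss of mass at $-\infty$, and a Lagrange-multiplier step placing the minimizer in $D_{x_0}$ with $\cL_{x_0}v=0$. The only cosmetic difference is that the paper derives non-vanishing from the quantitative inequality $\|v_k\|_{L^2((-\eta',0))}^2\geq K\|v_k\|_{L^2((-\infty,-\eta'))}^2$ rather than by your (equivalent) contradiction argument.
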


\begin{proof}
Let us consider $(v_k)_{k\in\NN}$ a minimizing sequence, normalized by $\|v_k\|_{L^2}=1$. From the G\aa rding estimate, we know that $(v_k)_{k\in\NN}$ is bounded in $H^1$ and thus, up to extracting a subsequence, we may assume that $(v_k)_{k\in\NN}$ converges weakly in $H^1$ to some $v_\infty\in H^1$. As a direct consequence of the Hahn-Banach theorem, we deduce that 
\begin{align*}
\int_{\R_-^*} |v_\infty'|^2
&+\,\int_{\R_-^*} |v_\infty|^2\left(\frac{1}{4}f_2^2+\frac{1}{2}f_2'\right)_+(\cdot+x_0)\\
&\leq \liminf_{k\to\infty}\left(
\int_{\R_-^*} |v_k'|^2
+\,\int_{\R_-^*} |v_k|^2\left(\frac{1}{4}f_2^2+\frac{1}{2}f_2'\right)_+(\cdot+x_0)\right)\,.
\end{align*}
Now pick $\eta>0$ such that $f_2^2+2f_2'$ is positive outside $(-\eta,0)$ and note that since $H^1((-\eta,0))$ is compactly embedded in $L^2((-\eta,0))$ we may assume that $(v_k)_k$ converges strongly to $v_{\infty}$ in $L^2((-\eta,0))$. Combined with \eqref{eq:Sobolev}, this is sufficient to take the limit $k\to\infty$ in the remaining part of $\cB_{x_0}(v_k,v_k)$. As a result
\[
\cB_{x_0}(v_\infty,v_\infty)\leq \lim_{k\to\infty}\cB_{x_0}(v_k,v_k)=0\,.
\]

We now prove that $v_\infty$ is nonzero. This is the place where we use the refined version of the G\aa rding estimate. Indeed it implies that there exist positive $\eta'$ and $K$ such that when $k$ is sufficiently large so as to force that $\cB_{x_0}(v_k,v_k)$ is sufficiently small
\[
\|v_k\|_{L^2((-\eta',0))}^2
\geq K\,\|v_k\|_{L^2((-\infty,-\eta'))}^2\,.
\] 
Since $\|v_k\|_{L^2}=1$, we deduce that 
\[
0<\liminf_{k\to\infty}\|v_k\|_{L^2((-\eta',0))}\,.
\]
Then, since we may assume that $(v_k)_k$ converges strongly to $v_{\infty}$ in $L^2((-\eta',0))$, we conclude that $v_\infty$ is nonzero.

Let us set $v:=v_\infty/\|v_\infty\|_{L^2}$. The vector $v$ is nonzero and satisfies $\cB_{x_0}(v,v)\leq0$, thus $\cB_{x_0}(v,v)=0$. Since $v$ minimizes the quadratic form associated with $\cB_{x_0}$ among vectors of $H^1$ with unit $L^2$ norm, there exists $\mu\in\C$ such that $\cB_{x_0}(v,\cdot)\,=\,\mu\,\langle v,\cdot\rangle_{L^2}$. In particular $v\in D_{x_0}$ and $\cL_{x_0}v=\overline{\mu} v$. Since $v$ is nonzero, evaluating the relation at $v$ shows that $\mu=0$ and concludes the proof of the lemma.
\end{proof}

The foregoing lemma is very close to many standard results but, unfortunately, we have not found a directly applicable version in the literature. Hence the above proof.

\begin{lemma}
If $v\in D_{x_0}$ is such that $\cL_{x_0}v=0$ then $v\equiv0$.
\end{lemma}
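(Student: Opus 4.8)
The plan is to solve the boundary-value problem $\cL_{x_0}v=0$ essentially explicitly. Since $\cL_{x_0}v=0$ is a second-order linear ODE on $\R_-^*$, its solution space is two-dimensional, and the constraint $v\in D_{x_0}\subset H^2(\R_-^*;\C)$ retains only those solutions that decay at $-\infty$. Because the coefficient $\left(\tfrac14 f_2^2+\tfrac12 f_2'\right)(\cdot+x_0)$ converges at $-\infty$ to $\tfrac14(\lim_{-\infty}f_2)^2>0$, the limiting constant-coefficient equation is $v''=\tfrac14(\lim_{-\infty}f_2)^2\,v$, so exactly one of its two exponential modes is integrable near $-\infty$; hence the space of $H^2$-solutions on $\R_-^*$ is one-dimensional. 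First I would exhibit this decaying solution explicitly, and then show that no nonzero multiple of it can meet the Robin boundary condition $v'(0^-)=c_2^{[x_0]}\,v(0^-)$ built into $D_{x_0}$, forcing $v\equiv0$.

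To produce the decaying solution I would exploit the translational zero-mode structure already recorded, namely that $(H',0)$ solves the interior $\lambda=0$ problem and that $H''=-f_2H'$. It follows that $u_2:=H-H_L$ solves $u_2''+f_2u_2'=0$ and tends to $0$ at $-\infty$; undoing the Liouville transformation at $\lambda=0$ and shifting by $x_0$, the function
\[
v_\star(x):=\exp\!\left(\tfrac12\int_0^{x+x_0}f_2\right)\bigl(H(x+x_0)-H_L\bigr)
\]
solves $\cL_{x_0}v_\star=0$ and decays at $-\infty$, so it spans the one-dimensional decaying subspace identified above. Along the smooth profile $H$ is strictly monotone and attains the value $H_L$ only as $x\to-\infty$, so $H(x_0)\neq H_L$ for every $x_0\in\R_-$ and thus $v_\star(0^-)\neq0$. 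Consequently any nonzero $v\in H^2(\R_-^*;\C)$ with $\cL_{x_0}v=0$ is a nonzero multiple of $v_\star$ and in particular does not vanish at $0^-$.

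It then remains to compare logarithmic derivatives at the boundary. I compute
\[
\frac{v_\star'(0^-)}{v_\star(0^-)}=\tfrac12 f_2(x_0)+\frac{H'(x_0)}{H(x_0)-H_L}\,,
\qquad
c_2^{[x_0]}=\tfrac12 f_2(x_0)+\frac{(H(x_0)-H_L)(H(x_0)-H_{out})}{a(x_0)\,H(x_0)^2}\,.
\]
Using the factorization \eqref{profileODE} together with $a=\dfrac{H^3-H_s^3}{F^2H^2}$ one simplifies $\dfrac{H'}{H-H_L}=\dfrac{(H-H_R)(H-H_{out})}{a\,H^2}$, whence
\[
\frac{v_\star'(0^-)}{v_\star(0^-)}-c_2^{[x_0]}=\frac{(H_L-H_R)\,(H(x_0)-H_{out})}{a(x_0)\,H(x_0)^2}\,.
\]
This is nonzero because $H_L\neq H_R$, because $H>H_{out}$ everywhere along the profile, and because $a(x_0)\neq0$ since the profile is non-characteristic. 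Therefore the boundary condition $v_\star'(0^-)=c_2^{[x_0]}v_\star(0^-)$ fails, no nonzero multiple of $v_\star$ lies in $D_{x_0}$, and hence $v\equiv0$.

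The hard part will be the identification of the explicit decaying solution: everything downstream is elementary algebra, but one must recognize that the interior operator is exactly the $\lambda=0$, Liouville-transformed and $x_0$-shifted equation and reconstruct its decaying solution from the profile zero-mode $(H',0)$ and the relation $H''=-f_2H'$. The subtlety to handle with care is the justification that $H^2$-membership isolates precisely this one-dimensional decaying subspace and that $v_\star(0^-)\neq0$, which rests on the strict monotonicity of the smooth profile (and is exactly what degenerates in the Riemann case (iii), already set aside) and makes the final logarithmic-derivative comparison legitimate.
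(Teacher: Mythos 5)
Your proof is correct and follows essentially the same route as the paper's: you identify the one-dimensional $H^2$-decaying solution space as spanned by the Liouville-transformed, $x_0$-shifted profile zero-mode $e^{\frac12\int f_2}(H(\cdot+x_0)-H_L)$ (your $v_\star$ agrees with the paper's $v^{[x_0]}$ up to a constant factor), and then rule it out of $D_{x_0}$ by comparing logarithmic derivatives at $0^-$, where the mismatch $\frac{(H_L-H_R)(H(x_0)-H_{out})}{a(x_0)H(x_0)^2}\neq 0$ is exactly the paper's $(H-H_R)$-versus-$(H-H_L)$ discrepancy. Your write-up is in fact slightly more explicit than the paper's (the simplification of $H'/(H-H_L)$ via the profile ODE and the nonvanishing of $v_\star(0^-)$), but the argument is the same.
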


\begin{proof}
Note that the set of $v\in H^2(\R_-)$ such that  
\[
-v''+v\,\left(\frac{1}{4}f_2^2+\frac{1}{2}f_2'\right)(\cdot+x_0)\,=\,0
\]
is one-dimensional. Moreover from the fact that $(H',Q')$ solves the interior spectral ODE system in original formulation, we deduce that $(H-H_L)''=-f_2\,(H-H_L)'$, and thus that 
\begin{align*}
v^{[x_0]}\,:&\,\R_-\to\R,\,&x&\mapsto e^{-\frac12\int_{x+x_0}^{x_0} f_2}\,\left(H(x+x_0)-H_L\right)=\sqrt{\frac{H'(x_0)}{H'(x+x_0)}}\,\left(H(x+x_0)-H_L\right)
\end{align*}
spans the above set.

To conclude we just need to check that $v^{[x_0]}\notin D_{x_0}$. This is indeed the case since
\begin{align*}
\frac{(v^{[x_0]})'}{v^{[x_0]}}(0)
&\,=\,-\frac12\frac{H''(x_0)}{H'(x_0)}+\frac{H'(x_0)}{H(x_0)-H_L}\\
&\,=\,\frac12 f_2(x_0)+\frac{(H(x_0)-H_R)(H(x_0)-H_{out})}{a(x_0)\,H(x_0)^2}\\
&\,\neq 
\,\frac12\,f_2(x_0)+\frac{(H(x_0)-H_L)(H(x_0)-H_{out})}{a(x_0)\,H(x_0)^2}
\,=\,c_2^{[x_0]}\,.
\end{align*}
\end{proof}

\subsection{The Riemann shock case}

We conclude our stability analysis by discussing how to adapt the above arguments to the Riemann shock case. The overall strategy is identical but details should be changed at various places.

We only indicate these modifications. To begin with, since $[r(W)]=0$, it is convenient to replace $\psi$ with $\tpsi:=\lambda\psi$. This does not change the nature of the spectral problem when $\lambda\neq0$ and simply decreases by $1$ the algebraic multiplicity of the eigenvalue $\lambda=0$. Our task is thus to determine when there exists a nonzero $(v,\tpsi)\in H^1_{\eta_L}(\R_-^*;\C^2)\times \C$ solving
\[
\left\{
\begin{array}{rl}
\lambda v+(A\,v)'&=\ E\,v,\qquad\textrm{on }\R_-^*,\\
\tpsi\left[\,W\right]
&=\ \left[Av\right]\,.
\end{array}
\right.
\]

Since $[W]$ is non zero, one may eliminate $\tpsi$ and reduce the discussion to the existence of a nonzero $u=(u_1,u_2):=(v_1,-cv_1+v_2)$ in $H^1_{\eta_L}(\R_-^*;\C^2)$ such that on $\R_-^*$
\[
\left\{
\begin{array}{l}
\lambda\,u_1+u_2'=0,\\
(au_1)'=\left(1-2\left(c-\frac{Q}{H}\right)\frac{Q}{H^2}-2\lambda\,\left(c-\frac{Q}{H}\right)\right)u_1-\left(\lambda+2\frac{Q}{H^2}+2\left(\frac{Q}{H}\right)'\right)\,u_2\,,
\end{array}
\right.
\]
and 
\[
u_2(0^-)\times\left[Q\right]
-\left(a\,u_1+\left(-c+2\frac{Q}{H}\right) \,u_2\right)(0^-)\times \left[H\right]\,=\,0\,.
\]
There is no nonzero solution with either $u_2$ vanishing identically or $\lambda=0$, so that the problem is equivalent to finding a nonzero $u_2\in H^2_{\eta_L}(\R_-^*;\C)$ solving on $\R_-^*$
\begin{align*}
u_2''+\left(f_1\lambda+f_2\right)u_2'+\lambda\,\left(f_3\lambda+f_4\right)\,u_2\,=\,0\,.
\end{align*}
and 
\[
u_2(0^-)\times\left[\lambda Q\right]
-\left(-a\,u_2'+\left(-c+2\frac{Q}{H}\right)\,\lambda\,u_2\right)(0^-)\times \left[ H\right]\,=\,0
\]
where
\begin{align*}
f_1&:=\frac{2}{a}\left(c-\frac{Q}{H}\right),&
f_2&:=-\frac{1}{a}\left(1-2\left(c-\frac{Q}{H}\right)\frac{Q}{H^2}\right),&
f_3&:=-\frac{1}{a},&f_4&:=-\frac{2}{a}\frac{Q}{H^2}\,,
\end{align*}
with $H$ and $Q$ constant equal to $H_L$ and $H_L^{3/2}$ respectively.

From here no change is needed in the reduction from $u_2$ to $w$, nor in Subsection~\ref{s:nonreal}. The core of Subsection~\ref{s:real} is simply replaced with a direct check that $\cB$ is positive definite. This follows from the fact that $f_2$ is a negatively-valued constant function and $c_2$ is also negative since it is equal to one half of this value. The sign observation stems from $H_L>H_s$ and $F>2$ which imply
\begin{align*}
1-2\left(c-\frac{Q_L}{H_L}\right)\frac{Q_L}{H_L^2}
\,=\,1-2\,\frac{q_0}{H_L^{\frac32}}
> 1-\frac{2}{F}>0\,.
\end{align*}

Summarizing the results of the present section with the ones of Section~\ref{s:spectral}, we obtain the following proposition.

\begin{proposition}[Convective exponential spectral stability in $\pazocal{R}_\mathrm{conv}$]
Discontinuous waves of region $\pazocal{R}_\mathrm{conv}$ are convectively exponentially spectrally stable. More precisely, when $\nu>1$ and $2\leq F <\sqrt{2\nu(1+\nu)}$, there is a choice of $\eta_L\in(\gamma_+^\infty,\gamma_-^\infty)$ and $\eta_R<0$ sufficiently negative such that the spectrum is included in $\left\{\lambda\in\C;\Re(\lambda)<-\theta<0\right\}\cup\left\{0\right\}$ in the $(\eta_L,\eta_R)$-weighted space for some $\theta>0$. Furthermore, $\lambda=0$ has multiplicity one. 
\end{proposition}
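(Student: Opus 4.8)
The plan is to read this proposition as a \emph{synthesis} of the essential-spectrum analysis of Section~\ref{s:spectral} and the point-spectrum analysis of the present section, so that once the weights are chosen the proof is mostly bookkeeping. First I would fix $\eta_L$ strictly inside $(\gamma_+^\infty,\gamma_-^\infty)$ and $\eta_R<0$. By the discontinuous-front computation of Subsections~\ref{s:consist} and the one following it (through Lemma~\ref{l:sqrt} and the $|\lambda|\to\infty$ asymptotics of $\gamma_{\pm,H_L}$), this strictly interior choice places the left-endstate essential spectrum in an open left half-plane $\{\,\Re(\lambda)<-\theta_1\,\}$ for some $\theta_1>0$, while taking $\eta_R$ sufficiently negative pushes the right-endstate contribution arbitrarily far to the left; since $H_R<H_s<H_L$ throughout $\pazocal{R}_\mathrm{conv}$, no other branch of essential spectrum intrudes. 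As observed at the end of the summary subsection, the same negative $\eta_R$ lets us discard any eigenfunction not vanishing on $\R_+$, so the remaining point-spectrum question reduces to the half-line problem on $\R_-^*$.

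Next I would exclude point spectrum with $\Re(\lambda)\ge0$ (other than the structural value $0$) using the reduction of this section. For a nonzero eigenfunction one passes to $u_2$, then to $w$ solving \eqref{weq}--\eqref{BCw0}, the equivalence $u_2\in H^2_{\eta_L}(\R_-^*)\Leftrightarrow w\in H^2(\R_-^*)$ being exactly the statement that $\lambda$ lies off the absolute spectrum. The energy identity \eqref{eigvalpbm} then splits into two regimes. For $\Im(\lambda)\neq0$, its imaginary part, together with $c_1<0$, $f_3<0$ and the sign $\inf_{\R_-^*}(-f_4+\tfrac12 f_1f_2+\tfrac12 f_1')>0$ established in Subsection~\ref{s:nonreal} (which in case (ii) is precisely $\cZ(H_L)>0$, i.e.\ $F<\sqrt{2\nu(\nu+1)}$), forces a uniform bound $\Re(\lambda)<-\theta_2$ for some $\theta_2>0$. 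For $\lambda\in[0,\infty)$, positivity of $\cA_\lambda$ (again from $F<\sqrt{2\nu(\nu+1)}$) and positivity of $\cB$ (the homotopy argument in $x_0$ together with the two lemmas of Subsection~\ref{s:real}) make $\cB(w,w)=-\lambda\,\cA_\lambda(w,w)$ untenable for $w\not\equiv0$, so no such eigenvalue exists; the degenerate subcase $u_2\equiv0$ produces only the translation mode at $\lambda=0$. The Riemann case (iii) is handled by the variant at the end of this section, where $\cB$ is directly positive definite because $f_2$ is a negative constant. Hence the only spectrum with $\Re(\lambda)\ge0$ is $\lambda=0$.

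To upgrade ``$\{\Re(\lambda)<0\}\cup\{0\}$'' into the exponential gap $\{\Re(\lambda)<-\theta\}\cup\{0\}$, I would combine the three uniform facts just obtained: the essential spectrum sits in $\{\Re(\lambda)<-\theta_1\}$, every non-real eigenvalue in $\{\Re(\lambda)<-\theta_2\}$, and on the real axis the only point of spectrum in $[0,\infty)$ is $0$. Because the essential spectrum is strictly stable, the point spectrum to its right is discrete with finite multiplicities and can accumulate only on the essential spectrum; thus $0$ is an isolated eigenvalue, separated from the remaining spectrum by a positive distance, which yields a real-axis gap $\theta_3>0$. Setting $\theta:=\min\{\theta_1,\theta_2,\theta_3\}$ gives the claimed inclusion in the $(\eta_L,\eta_R)$-weighted space.

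Finally, for the multiplicity of $0$: the reduction shows that the kernel of $L_0$ is exactly the span of the translation mode (the $u_2\equiv0$ solution, a multiple of $(H',0)$ together with its $\psi$-component), so the geometric multiplicity is one, and $W'\in Y$ here since Remark~\ref{follow-up-NL} gives $\eta_L<\eta_L^\infty$, so $0$ is a genuine eigenvalue of the weighted operator. To conclude algebraic multiplicity one I would rule out a Jordan chain, i.e.\ show the translation mode is not in the range of $L_0$: concretely, by differentiating the associated Evans--Lopatinski\u{\i} determinant in $\lambda$ at $0$ and checking the derivative is nonzero, equivalently by solving the inhomogeneous jump problem and exhibiting the obstruction in the boundary relation \eqref{BCw0}. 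I expect this transversality computation, and to a lesser extent the clean extraction of the real-axis gap $\theta_3$ near $0$, to be the only steps not delivered essentially verbatim by Sections~\ref{s:spectral}--\ref{s:sl}; everything else is assembly. The real conceptual obstacle underlying the whole statement is the positivity of $\cB$ in case (ii), but that is exactly what the homotopy argument and its two lemmas secure.
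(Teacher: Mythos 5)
Your proposal is correct and follows essentially the same route as the paper, whose ``proof'' of this proposition is precisely the synthesis you describe: the weighted essential-spectrum gap for $\eta_L\in(\gamma_+^\infty,\gamma_-^\infty)$ and $\eta_R$ sufficiently negative from Section~\ref{s:spectral}, the exclusion of non-real eigenvalues via the imaginary part of \eqref{eigvalpbm} (with $c_1<0$, $f_3<0$ and $\cZ(H_L)>0$ equivalent to $F<\sqrt{2\nu(\nu+1)}$), the exclusion of real $\lambda\geq0$ via positivity of $\cA_\lambda$ and $\cB$ through the homotopy in $x_0$ and the two lemmas, and the separate Riemann-case variant. Your added glue --- isolating $\lambda=0$ by discreteness of point spectrum to the right of the stabilized essential spectrum, and settling algebraic multiplicity one by a transversality (Evans--Lopatinski\u{\i} derivative) computation --- is exactly the part the paper also leaves implicit in its summarizing statement, and you correctly flag it as such.
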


The above result is sharp since for $\nu>1$ and $F>\sqrt{2\nu(\nu+1)}$, that is in region $\pazocal{R}_\mathrm{abs}$, the corresponding waves are absolutely unstable as shown in Section~\ref{s:spectral}.

\section{Linear and nonlinear convective stability}\label{s:stab}
At this point, we have shown that convective spectral stability holds (with scalar weight)
for $F<\sqrt{2\nu(\nu+1)}$, and fails (for any weight) for $F>\sqrt{2\nu(\nu+1)}$, 
We now complete our discussion of convective stability by invoking a Lyapunov-type argument showing that convective spectral stability implies linear and nonlinear convective orbital stability, at time-exponential rate.

Convective spectral stability in the semilinear parabolic case, with a smooth background traveling wave, yields fairly immediately time-exponential asymptotic orbital stability, 
by well known arguments of Sattinger \cite{Sa} and Henry \cite{He} similar to those for the finite-dimensional ODE case. The present setting involving discontinuous background waves and quasilinear hyperbolic equations requires a much more technical analysis, at the frontier of the current knowledge on nonlinear wave stability theory.

The expository choice we make is to borrow results from the forthcoming \cite{FR2} that carries out a systematic development in a more general setting, in the spirit of \cite{FR1}. We stress however that, to a large extent, a relatively simple adaptation of the techniques used in \cite{YZ} for the neutrally stable case\footnote{See also the related discussion of \cite[p. 201]{YZ}.} would already be sufficient for the present case. Nevertheless a self-contained exposition of this adequate version would essentially double the size of the present contribution. Even for the smooth case, none of the results in the literature seems directly applicable but, likewise, a relatively simple variation on \cite{MZ,MaZ} would yield the required result.

\subsection{Linear estimates}\label{s:lin}

We begin with estimates for the linearized problem \eqref{affine-smooth}
\be\label{2affine-smooth}
\partial_t v+ \partial_x(A\,v) = E\,v+F,
\qquad\textrm{on }\R_+\times\R
\ee
with initial data $v_0(x)$ and interior source term $F(t,x)$. 

\begin{proposition}\label{linbd_smooth}
Let $W=(H,Q)$ be a traveling-wave solution of type \textup{(v)} and consider $(\eta_L,\eta_R)$ spatial weight growths ensuring\footnote{For instance, $\eta_L$ positive and $\eta_R$ negative both sufficiently small in absolute value.} a spectral gap. Then there exist positive $\theta$ and $C$ such that for any $1\leq p\leq \infty$, if
\[
(v_0,F)\in 
\begin{cases}
L^p_{\eta_L,\eta_R}(\R)\times \cC^0(\R_+;L^p_{\eta_L,\eta_R}(\R))&\quad 1\leq p<\infty\\
BUC^0_{\eta_L,\eta_R}(\R)\times \cC^0(\R_+;BUC^0_{\eta_L,\eta_R}(\R))&\quad p=\infty
\end{cases}
\]
then for $v$ the unique mild solution to \eqref{2affine-smooth} (in $\cC^0(\R_+;L^p_{\eta_L,\eta_R}(\R))$ if $1\leq p<\infty$, $\cC^0(\R_+;BUC^0_{\eta_L,\eta_R}(\R))$ if $p=\infty$) with initial data $v_0$, there exists a phase shift $\varphi \in\cC^1(\R_+)$ vanishing initially such that for any $t\geq0$
\ba\label{linbds_smooth}
\|v(t,\cdot)+\varphi(t)\,W'\|_{L^p_{\eta_L,\eta_R}}+ |\varphi'(t)|
	&\leq Ce^{-\theta t}\|v_0\|_{L^p_{\eta_L,\eta_R}} 
	+ C \int_0 ^t e^{-\theta (t-s)}\|F(s,\cdot)\|_{L^p_{\eta_L,\eta_R}}\md s\,.
	\ea
\end{proposition}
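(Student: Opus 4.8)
The plan is to deduce the estimate from a spectral decomposition of the semigroup generated by the linearized operator $\cL\,v:=Ev-(Av)'$ together with a suitably \emph{filtered} choice of phase. First I would record that, since the profile is non-characteristic and the relaxation term carries genuine damping, $\cL$ generates a $\cC^0$-semigroup $(e^{t\cL})_{t\geq0}$ on each weighted space $L^p_{\eta_L,\eta_R}(\R)$, $1\leq p<\infty$, and on $BUC^0_{\eta_L,\eta_R}(\R)$: concretely one conjugates by a smooth weight interpolating the growth rates $\eta_L$ and $\eta_R$ to recast the weighted problem as an unweighted one with shifted, exponentially localized coefficients, and invokes the abstract generation statements of \cite{FR2} (in the spirit of \cite{MZ,MaZ}). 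By the convective exponential spectral stability of case \textup{(v)} established in Section~\ref{s:smooth} --- recall that, for the chosen weights with $\eta_L<\eta_L^\infty$ and $\eta_R>\eta_R^\infty$, the translational mode $W'$ lies in the weighted space and $\lambda=0$ is a simple isolated eigenvalue, the remaining spectrum lying in $\{\Re(\lambda)\leq-\theta_0\}$ for some $\theta_0>0$ --- the projection $\Pi_0\,u:=\langle\tphi,u\rangle\,W'$ onto $\ker\cL$ is well defined and bounded, where $\langle\cdot,\cdot\rangle$ is the $L^2(\R;\C^2)$ pairing, $\tphi$ spans $\ker\cL^*$ with the normalization $\langle\tphi,W'\rangle=1$, and $\tphi$ is exponentially localized (so that $\langle\tphi,\cdot\rangle$ extends boundedly to all spaces at hand). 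One then splits $e^{t\cL}=\Pi_0+\tilde S(t)$, with $\tilde S(t):=(I-\Pi_0)e^{t\cL}$ time-independent projection factored out using $\cL W'=0$.

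The key analytic input --- and the main obstacle --- is the decay bound $\|\tilde S(t)\|\leq Ce^{-\theta t}$ for some $0<\theta<\theta_0$, holding simultaneously on $L^p_{\eta_L,\eta_R}$ for all $1\leq p\leq\infty$ and on $BUC^0_{\eta_L,\eta_R}$. This is \emph{not} a routine consequence of sectorial semigroup theory, because the system is hyperbolic: there is no parabolic smoothing making the inverse-Laplace contour integral of $(\lambda-\cL)^{-1}$ converge over the high-frequency range. Instead I would obtain it from the two complementary mechanisms already visible at the spectral level: the relaxation damping in $E$, combined with non-characteristicity, forces exponential decay of the high-frequency resolvent and places the continuous spectrum of the weighted operator strictly left of $-\theta$, while the spectral gap from Section~\ref{s:smooth} controls the low-frequency part away from the removed simple eigenvalue at $0$. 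Patching these into semigroup bounds valid simultaneously in $L^1$, $L^\infty$ and $L^2$ --- whence in every $L^p$ and $BUC^0$ by interpolation --- is the technical heart of the matter, and I would carry it out through the pointwise Green's-function/resolvent estimates of \cite{FR2} (compare \cite{MaZ}), which is precisely why we prefer to quote that reference rather than reprove the bound here.

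With the decomposition in hand, the estimate follows by Duhamel's formula and a filtered phase. Writing the mild solution as
\[
v(t)=e^{t\cL}v_0+\int_0^t e^{(t-s)\cL}F(s)\,\dif s
=\Pi_0\Big(v_0+\int_0^t F(s)\,\dif s\Big)+\tilde S(t)v_0+\int_0^t\tilde S(t-s)F(s)\,\dif s,
\]
the stable part is directly bounded by $Ce^{-\theta t}\|v_0\|+C\int_0^t e^{-\theta(t-s)}\|F(s)\|\,\dif s$ using the decay of $\tilde S$, where $\|\cdot\|$ denotes the relevant weighted norm. It remains to absorb the non-decaying neutral part $\Pi_0 v(t)=\big(\langle\tphi,v_0\rangle+\int_0^t\langle\tphi,F(s)\rangle\,\dif s\big)W'$ into the phase. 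Rather than cancelling it exactly --- which would force $\varphi'=-\langle\tphi,F\rangle$, incompatible with the claimed convolution bound on $\varphi'$ --- I would define $\varphi$ as the low-pass filtered neutral coordinate, namely the unique solution of
\[
\varphi'(t)=-\theta\big(\langle\tphi,v(t)\rangle+\varphi(t)\big),\qquad \varphi(0)=0,
\]
which is $\cC^1$ and vanishes initially.

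Setting $m(t):=\langle\tphi,v(t)\rangle+\varphi(t)$, one checks from $\tfrac{\dif}{\dif t}\langle\tphi,v\rangle=\langle\tphi,F\rangle$ (the projected equation, valid for mild solutions) that $m'=\langle\tphi,F\rangle-\theta m$ with $m(0)=\langle\tphi,v_0\rangle$, hence
\[
m(t)=e^{-\theta t}\langle\tphi,v_0\rangle+\int_0^t e^{-\theta(t-s)}\langle\tphi,F(s)\rangle\,\dif s,
\]
and the neutral coordinate of $v+\varphi W'$ is exactly $m(t)$. Since $\langle\tphi,\cdot\rangle$ is bounded on the weighted spaces, $|m(t)|\leq Ce^{-\theta t}\|v_0\|+C\int_0^t e^{-\theta(t-s)}\|F(s)\|\,\dif s$. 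As $v(t)+\varphi(t)W'=\tilde S(t)v_0+\int_0^t\tilde S(t-s)F(s)\,\dif s+m(t)W'$ and $\varphi'(t)=-\theta\,m(t)$, both $\|v+\varphi W'\|$ and $|\varphi'|$ obey the asserted bound, completing the proof.
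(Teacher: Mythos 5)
Your proposal is correct in outline, but it is worth saying plainly how it sits relative to the paper: the paper gives essentially no proof of Proposition~\ref{linbd_smooth} --- it borrows the result wholesale from the forthcoming general theory of \cite{FR2}, adding only two remarks, namely that Duhamel reduces the statement to the sourceless case $F\equiv0$, and that for $p=2$ the sourceless bound follows from the Gearhart--Pr\"uss theorem together with high-frequency resolvent estimates. You defer exactly the same hard core (uniform-in-$p$ exponential decay of the stable part of the semigroup, which for $p\neq2$ genuinely requires the pointwise Green's-function bounds of \cite{FR2}, in the spirit of \cite{MaZ}, since Gearhart--Pr\"uss is an $L^2$ tool), so on that front your route and the paper's coincide. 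What you add beyond the paper is the explicit architecture around that black box, and it is sound: the spectral splitting $e^{t\cL}=\Pi_0+\tilde S(t)$ with $\Pi_0=\langle\tphi,\cdot\rangle\,W'$ is legitimate because, for the small weights of the footnote ($\eta_L<\eta_L^\infty$, $\eta_R>\eta_R^\infty$), $W'$ lies in the weighted space and $0$ is a simple isolated eigenvalue; and your filtered-phase ODE $\varphi'=-\theta(\langle\tphi,v\rangle+\varphi)$ is the right device --- you correctly identify that exact cancellation $\varphi=-\langle\tphi,v\rangle$ would violate both $\varphi(0)=0$ and the convolution bound on $|\varphi'|$ (a spike in $F$ at time $t$ is not controlled by $\int_0^te^{-\theta(t-s)}\|F(s)\|\,\md s$), and your computation $m'=\langle\tphi,F\rangle-\theta m$, resting on the valid mild-solution identity $\langle\tphi,v(t)\rangle=\langle\tphi,v_0\rangle+\int_0^t\langle\tphi,F(s)\rangle\,\md s$, delivers exactly the asserted estimate for both $\|v+\varphi W'\|$ and $|\varphi'|=\theta|m|$. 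Three small points you should still verify rather than assert: (a) the localization of the adjoint eigenfunction $\tphi$ must be checked against the chosen weights, i.e.\ $e^{\eta_L\cdot}\tphi|_{\R_-}\in L^{p'}$ and $e^{\eta_R\cdot}\tphi|_{\R_+}\in L^{p'}$, so that the pairing is bounded on every space in play (for $p=2$ this comes from the Riesz projection, but the $p$-independence of the rank-one kernel needs the Green's-function representation); (b) the constants $(\theta,C)$ must be taken uniform in $p$, which your $L^1$/$L^\infty$-plus-interpolation remark handles but which again leans on kernel bounds rather than semigroup abstraction; (c) if the hypothesis ``spectral gap'' were realized with weights large enough that $W'\notin Y$, your $\Pi_0$ is zero and one takes $\varphi\equiv0$ --- a trivial adaptation, but worth a sentence since the statement does not restrict to the footnote's regime.
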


In the foregoing statement $BUC^0(\Omega)$ denotes the space of functions that are bounded on $\Omega$, and uniformly continuous on each connected component of $\Omega$.

We recall that Duhamel formula enables one to reduce the previous statement to the sourceless case. Moreover we point out that in the case $p=2$ the statement follows from the Gearhart-Pr\"uss theorem and high-frequency bounds on resolvents.

Consider again the linearized problem \eqref{affine-disc}:
\be\label{2affine-disc}
\left\{
\begin{array}{rl}
\partial_t v+ \partial_x(A\,v)&=\ E\,v+F,\qquad\textrm{on }\R_+\times\R^*,\\
\frac{\md \psi}{\md t}\left[\,W\,\right] -\psi\left[r(W)\right]
&=\ \left[Av\right]+G,\qquad\textrm{on }\R_+,
\end{array}
\right.
\ee
with initial data $(v_0(x),\psi_0)$, interior source term $F(t,x)$, and boundary source-term $G(t)$. Recall from the original derivation of \eqref{affine-disc} that here there is no freedom in the phase shift that may be removed from $v$ so as to obtain time decay. We need to prove that $v-(-\psi)\,W'$ is decaying. In contrast, in the smooth case, the phase shift $\varphi$ is far from unique.

\begin{proposition}\label{linbd_disc}
Let $W=(H,Q)$ be a traveling-wave solution of type \textup{(ii)}-\textup{(iv)} satisfying the sharp convective spectral stability condition\footnote{Automatically satisfied in cases (iii) and (iv).} 
\begin{align*}
F&<\sqrt{2\nu(\nu+1)}\,,& \nu&:=\sqrt{\frac{H_L}{H_R}}\,,
\end{align*}
and consider $(\eta_L,\eta_R)$ spatial weight growths ensuring\footnote{For instance, when $F>2$, $\eta_L\in(\eta_L^\mathrm{min},\eta_L^\mathrm{max})$ and $\eta_R$ sufficiently negative; when $F<2$, $\eta_L$ positive and $\eta_R$ negative both sufficiently small in absolute value.} a spectral gap. Then there exist positive $\theta$ and $C$ such that for any $1\leq p\leq \infty$, if
\[
(v_0,\psi_0,F,G)\in 
\begin{cases}
L^p_{\eta_L,\eta_R}(\R)\times\R\times \cC^0(\R_+;L^p_{\eta_L,\eta_R}(\R))\times \cC^0(\R_+)&\quad 1\leq p<\infty\\
BUC^0_{\eta_L,\eta_R}(\R^*)\times\R\times \cC^0(\R_+;BUC^0_{\eta_L,\eta_R}(\R^*))\times \cC^0(\R_+)&\quad p=\infty
\end{cases}
\]
then $(v,\psi)$, the unique mild solution to \eqref{2affine-disc} with initial data $(v_0,\psi_0)$, satisfies for any $t\geq0$
\ba\label{linbds_disc}
&\|v(t,\cdot)+\psi(t)\,W'\|_{L^p_{\eta_L,\eta_R}}+ |\psi'(t)|+\|\left(v(t,\cdot)+\psi(t)\,W'\right)(0^-)\|
+\|\left(v(t,\cdot)+\psi(t)\,W'\right)(0^+)\|\\
&\quad\leq Ce^{-\theta t}\|v_0\|_{L^p_{\eta_L,\eta_R}} 
+ C \int_0 ^t e^{-\theta (t-s)}\|F(s,\cdot)\|_{L^p_{\eta_L,\eta_R}}\md s
+ C \int_0 ^t e^{-\theta (t-s)}\|G(s)\|\,\md s\,.
\ea
\end{proposition}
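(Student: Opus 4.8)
The plan is to obtain Proposition~\ref{linbd_disc} as a specialization of the general linear theory of \cite{FR2}, whose structural and spectral hypotheses I would verify one at a time for the case at hand. Concretely, I would first recast the linearized system \eqref{2affine-disc} as an abstract Cauchy problem $\tfrac{\md}{\md t}U=\mathcal{A}U+\mathcal{F}$ on the weighted state space, which for $p=2$ is the Hilbert space $X=L^2_{\eta_L,\eta_R}(\R^*;\C^2)\times\C$, with $U=(v,\psi)$ and $\mathcal{A}$ encoding both the interior operator $Ev-(Av)'$ and the coupling Rankine--Hugoniot relation carried by the second line of \eqref{2affine-disc}. The first task is to show that $\mathcal{A}$ generates a $C_0$-semigroup: this is exactly where the non-characteristic assumption on $W$ is essential, since it makes the hyperbolic initial--boundary-value problem with the jump condition well posed and, crucially, supplies control of the traces at $0^\pm$ that appear in \eqref{linbds_disc}.

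Second, I would inject the spectral information already at our disposal. The preceding Proposition (convective exponential spectral stability on $\pazocal{R}_\mathrm{conv}$) provides, for the chosen weights $(\eta_L,\eta_R)$, a genuine spectral gap: $\spec(\mathcal{A})\subset\{\Re\lambda<-\theta\}\cup\{0\}$ with $\lambda=0$ a simple eigenvalue whose spectral projection $\Pi_0$ has one-dimensional range spanned by the translation mode attached to $W'$. The phase $\psi(t)$ of the statement is then the coordinate of $U(t)$ along this neutral mode, while the modified unknown $\tilde w:=v+\psi\,W'$ — which, through the change of variables behind the derivation of \eqref{affine-disc}, is the genuine perturbation seen in the shock-tracking frame — is the part governed by the gapped, spectrally stable dynamics, and is therefore what must decay.

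Third, to convert the gap into a time-decay estimate I would establish uniform bounds for the resolvent $L_\lambda^{-1}$ along a vertical line $\Re\lambda=-\theta'$ with $0<\theta'<\theta$. The low-frequency portion is handled by compactness together with the absence of spectrum off $\{0\}$, and the high-frequency portion rests directly on the computations of Section~\ref{s:spectral}: the asymptotics of $\gamma_{\pm,h}(\lambda)$ and Lemma~\ref{l:sqrt} show that the weighted essential spectrum stays uniformly to the left and keep the traces bounded. For $p=2$ and data only, this line bound feeds the Gearhart--Pr\"uss theorem on $\ran(I-\Pi_0)$ and yields $\|e^{t\mathcal{A}}(I-\Pi_0)\|_{X\to X}\le Ce^{-\theta t}$; the interior source $F$ is then reinstated by Duhamel's formula, whereas the boundary source $G$, which (as noted in Remark~\ref{sourcermk}) cannot be recovered by Duhamel, must instead be treated through direct inverse-Laplace inversion of $L_\lambda^{-1}$ using the same line bounds. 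Reading off coordinates gives the $p=2$ form of \eqref{linbds_disc}: the bound on $|\psi'|$ follows by differentiating the projected equation (the static neutral contribution drops out, leaving only gapped and source terms), and the trace bounds follow from the trace control secured in the generation step. For general $1\le p\le\infty$, the very same scheme applies, but the decay of $e^{t\mathcal{A}}(I-\Pi_0)$ on $L^p_{\eta_L,\eta_R}$ is out of reach of Gearhart--Pr\"uss and is instead the output of the dedicated analysis of \cite{FR2}, which I would quote once its hypotheses have been checked above.

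The principal obstacle is the complete absence of parabolic smoothing: unlike the reaction--diffusion setting treated by Sattinger and Henry \cite{Sa,He}, the generator here is quasilinear hyperbolic with a free boundary at the shock, so neither analytic-semigroup bounds nor $L^p$--$L^q$ regularization are available. Producing decay at a single uniform rate $\theta$ for \emph{all} $p$, controlling the traces $(\tilde w)(0^\pm)$, and accommodating the non-Duhamel boundary source $G$ must therefore be extracted from the finite-propagation/characteristic structure in combination with the high-frequency resolvent bounds. This is precisely the delicate mechanism that \cite{FR2} is built to handle in generality, which is why I would borrow those results here rather than reprove them; the content of the proof is the verification that the present discontinuous, non-characteristic, convectively exponentially spectrally stable wave meets their hypotheses.
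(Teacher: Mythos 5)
Your proposal is correct and follows essentially the same route as the paper, which likewise gives no self-contained proof but states the proposition as a specialization of the general theory of \cite{FR2}, with the same supporting ingredients you identify: the spectral gap from the preceding proposition, Gearhart--Pr\"uss plus high-frequency resolvent bounds for $p=2$, Duhamel for the interior source $F$ but not for the boundary source $G$ (consistent with Remark~\ref{sourcermk}), decay of $v+\psi\,W'$ rather than of $v$ alone, and traces at $0^\pm$ recovered from the equation via the non-characteristic assumption. The only slight mismatch is your opening claim to establish $C_0$-semigroup generation: per Remark~\ref{sourcermk} the discontinuous problem is instead placed in the infinite-dimensional Laplace-transform framework of \cite{ABHN_Laplace}, though this framing difference does not affect the substance of the argument.
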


Note that the level of regularity of the previous statement is insufficient, alone, to define traces at $0^\pm$. The existence of those is a consequence of the fact that $(v,\psi)$ solves \eqref{2affine-disc} and that the shock is non characteristic.

\subsection{Nonlinear stability}\label{s:nonlin}

Using Propositions~\ref{linbd_smooth} and~\ref{linbd_disc} in order to prove nonlinear stability results induces a severe loss of derivatives due to the quasilinear character of the original system. A by-now classical way to cure this loss is to combine the latter with nonlinear high-frequency damping estimates, that show that as long as the Lipschitz norm of the solution remains under control, the time decay of any Sobolev norm is slaved to the time decay of the $L^2$ norm. Designing such nonlinear high-frequency damping estimates is a significant part of the nonlinear stability analysis. When proceeding in this way, it is actually sufficient to prove linear stability with derivative losses, as accessible through what the fourth author has dubbed the ``poor man's Pr\"uss construction''  \cite{Z}. On nonlinear high-frequency damping estimates, we refer to \cite[Appendix~A]{R_HDR} for an introduction the classical Kawashima version for the stability of constant states \cite{Kaw,SK} and to \cite{MaZ,RZ,YZ,FR1} for versions more directly related to the present analysis.

\bt\label{main-smooth}
Let $W=(H,Q)$ be a traveling-wave solution of type \textup{(v)} and consider $(\eta_L,\eta_R)$ spatial weight growths ensuring\footnote{For instance, $\eta_L$ positive and $\eta_R$ negative both sufficiently small in absolute value.} a spectral gap, with $\eta_L$ postive and $\eta_R$ negative. Then there exist positive $\delta$, $\theta$ and $C$ such that if 
\[
\delta_{w_0}:=\|w_0-W\|_{H^2_{\eta_L,\eta_R}(\R)}\leq \delta
\]
then for $w$ the unique mild solution to \eqref{sv} (in $\cC^0(\R_+;H^2_{\eta_L,\eta_R}(\R))$) with initial data $w_0$, there exists a phase shift $\varphi \in\cC^1(\R_+)$ vanishing initially and an asymptotic shift $\varphi_\infty\in\R$ such that for any $t\geq0$
\ba\label{bds_smooth}
\|w(t,\cdot)-\,W(\cdot-(ct+\varphi(t)))\|_{H^2_{\eta_L,\eta_R}(\R)}+ |\varphi'(t)|
+|\varphi(t)-\varphi_\infty|
	&\leq Ce^{-\theta t}\delta_{w_0}\,,
\ea
and $|\varphi_\infty|\leq C\,\delta_{w_0}$.
\et

\bt\label{main-disc}
Let $W=(H,Q)$ be a traveling-wave solution of type \textup{(ii)}-\textup{(iv)} satisfying the sharp convective spectral stability condition\footnote{Automatically satisfied in cases (iii) and (iv).} 
\begin{align*}
F&<\sqrt{2\nu(\nu+1)}\,,& \nu&:=\sqrt{\frac{H_L}{H_R}}\,,
\end{align*}
and consider $(\eta_L,\eta_R)$ spatial weight growths ensuring\footnote{For instance, when $F>2$, $\eta_L\in(\eta_L^\mathrm{min},\eta_L^\mathrm{max})$ and $\eta_R$ sufficiently negative; when $F<2$, $\eta_L$ positive and $\eta_R$ negative both sufficiently small in absolute value.} a spectral gap, with $\eta_L$ postive and $\eta_R$ negative. Then there exist positive $\delta$, $\theta$ and $C$ such that if 
\[
\delta_{w_0}:=\|w_0-W\|_{H^2_{\eta_L,\eta_R}(\R^*)}\leq \delta
\]
with $w_0-W$ supported away from zero, then there exists a global solution to \eqref{sv}, $w$, with initial data $w_0$ having at each time $t\geq0$ a single shock, located at $ct+\psi(t)$, with $\psi \in\cC^1(\R_+)$ vanishing initially and an asymptotic shift $\psi_\infty\in\R$ such that for any $t\geq0$
\ba\label{bds_smooth}
\|w(t,\cdot+(ct+\psi(t)))-\,W\|_{H^2_{\eta_L,\eta_R}(\R^*)}+ |\psi'(t)|
+|\psi(t)-\psi_\infty|
	&\leq Ce^{-\theta t}\delta_{w_0}\,,
\ea
and $|\psi_\infty|\leq C\,\delta_{w_0}$.
\et

Note that none of the constants depend on how far the support of $w_0-W$ is from $0$. The assumption is simply made to assure that the initial data is compatible with the short-time persistence of a single-shock piecewise-$H^2$ solution. We could have instead assumed directly the optimal but cumbersome compatibility conditions, as in \cite{FR1,FR2}. On the related local-in-time propagation of regularity we refer to \cite{Me,BenzoniGavage-Serre_multiD_hyperbolic_PDEs}.

\section{Numerical time-evolution experiments}\label{s:num}

We augment our analytic treatment by examples of numerical time evolution experiments using CLAWPACK \cite{C1,C2}. More precisely, in a first set of numerical experiments, we test the convective nonlinear (in)stability of nonmonotone discontinuous waves of type (ii) depending if whether we are in region $\pazocal{R}_{\mathrm{conv}}$  or $\pazocal{R}_{\mathrm{abs}}$ ($(F,H_R/H_L)=(2.28,0.7)$ versus $(F,H_R/H_L)=(2.30,0.7)$ as an example), and also demonstrate the convective nonlinear stability of Riemann profiles (iii) ($(F,H_R/H_L)=(\sqrt{85}/10+\sqrt{238}/14,0.7)$ as an example). Finally, in a second set of numerical experiments, we highlight the convective nonlinear instability of increasing smooth waves of type (i) ($(F,H_R/H_L)=(3,1.3)$ as an example).
\begin{figure}[htbp]
    \centering
    \includegraphics[scale=0.32]{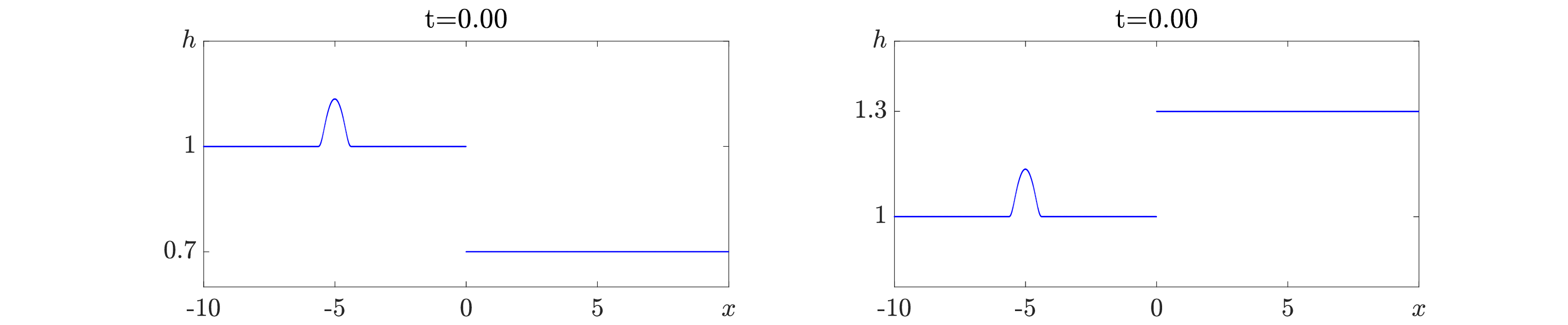}
    \caption{Left panel: Dambreak initial data \eqref{numIC1}; Right panel: Dambreak initial data \eqref{numIC2}.}
    \label{initial_data}
\end{figure}
\begin{figure}[htbp]
\includegraphics[scale=0.35]{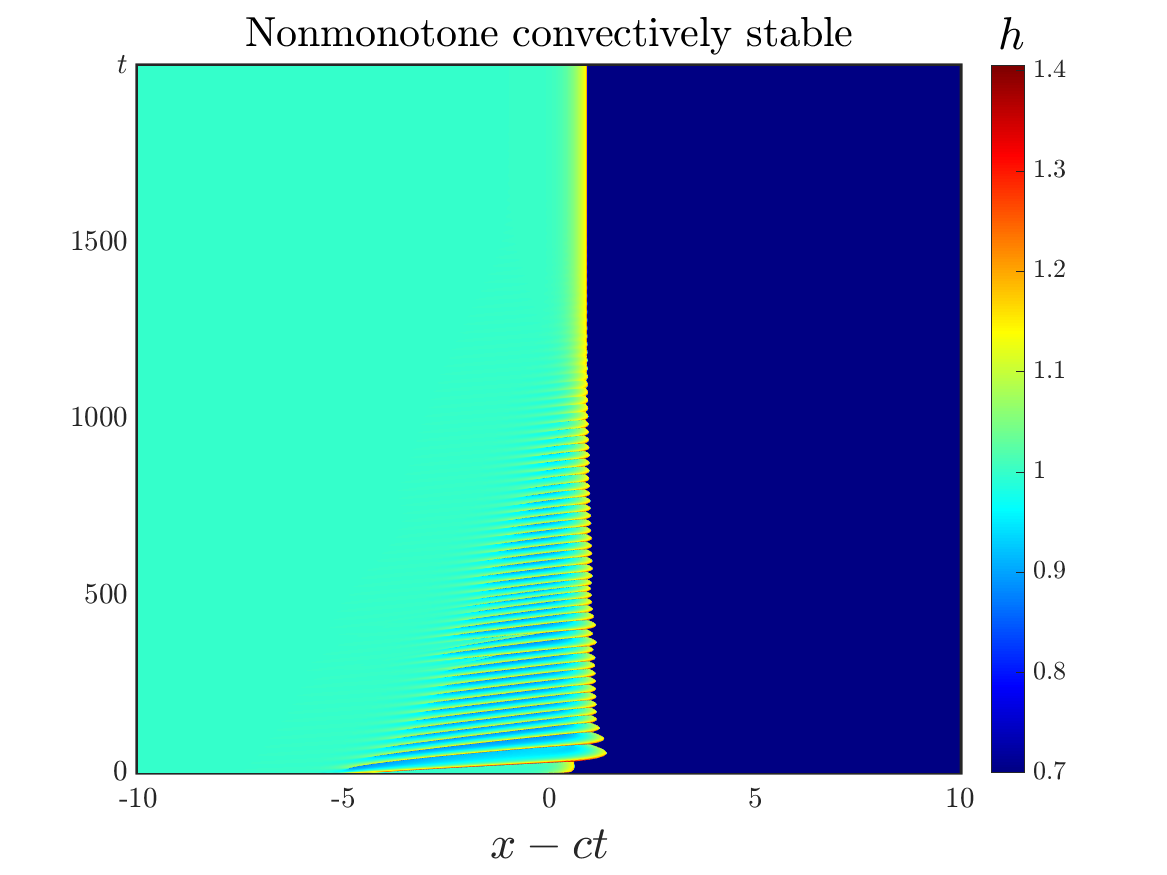}
\includegraphics[scale=0.35]{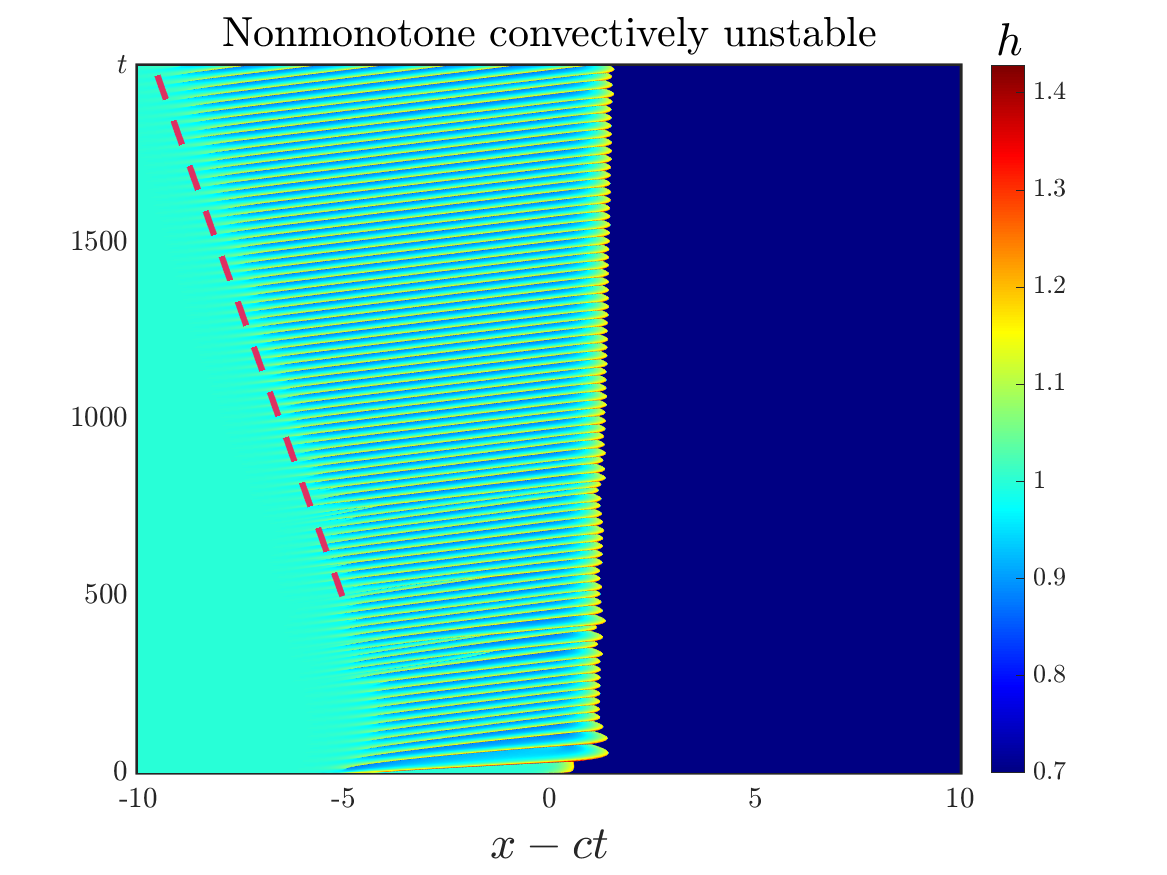}
\includegraphics[scale=0.35]{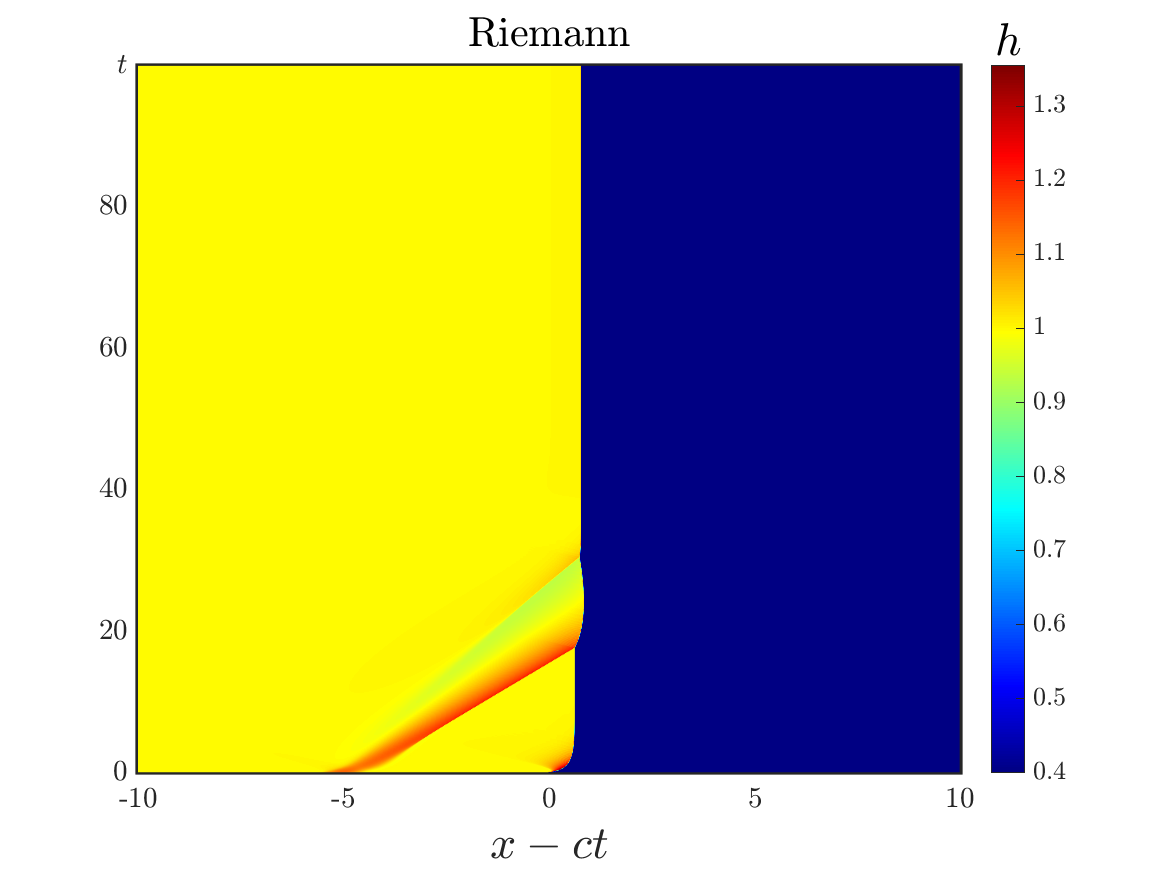}
\includegraphics[scale=0.35]{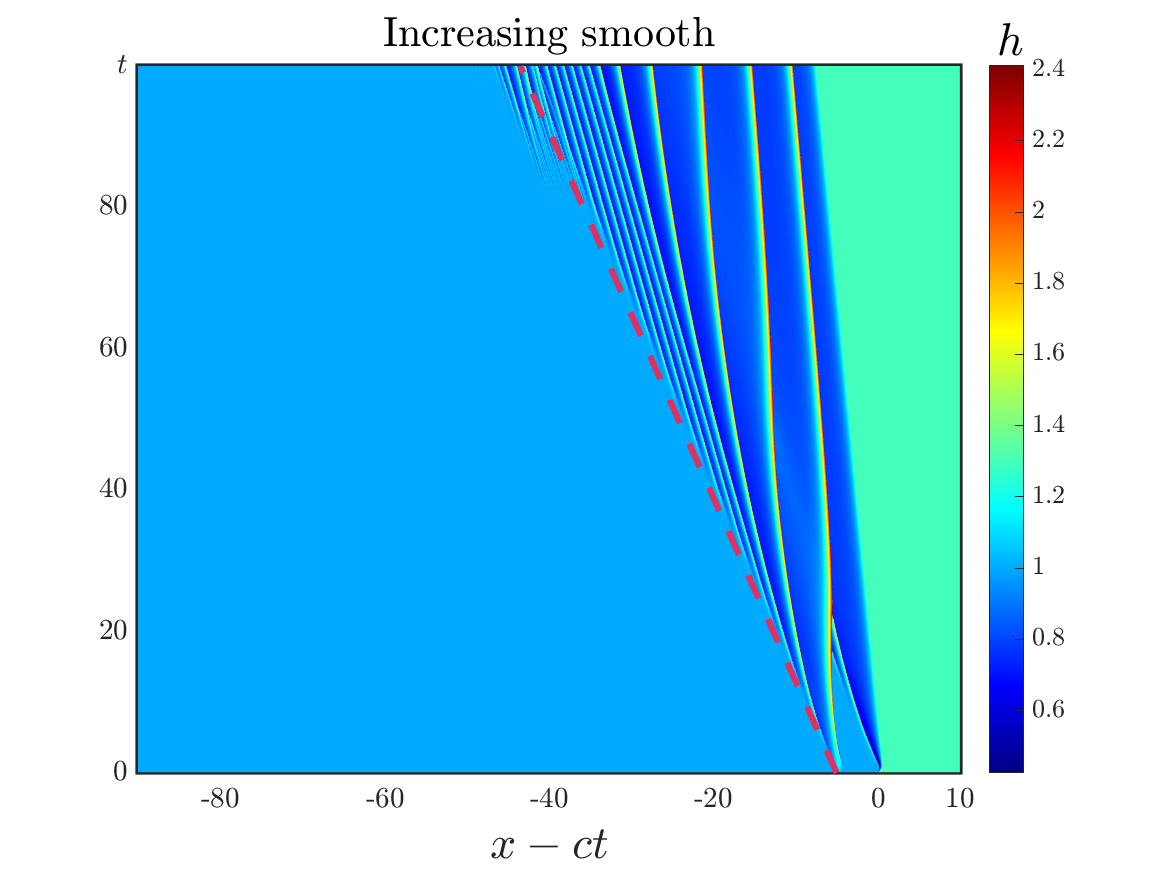}
\caption{Space-time plots of simulations shown in Figures~\ref{nonmonotone_stable}--\ref{increasing}.}
\label{space-time}
\end{figure}

\subsection{Nonmonotone discontinuous waves and Riemann shock} Throughout this section we set $H_L=1$ and $H_R=0.7$. We first note that profiles are of nonmonotone discontinuous type (ii) if $$2.02390\ldots=\frac{\sqrt{85}}{10}+\frac{\sqrt{238}}{14}<F<\frac{\sqrt{70}}{7} + \frac{10}{7}=2.62380\ldots$$ and they are convectively exponentially stable given that $$F<\sqrt{\frac{2}{0.7}+\frac{2}{\sqrt{0.7}}}= 2.29076\ldots.$$ When $F=\frac{\sqrt{85}}{10}+\frac{\sqrt{238}}{14}$ profiles are of Riemann shock type (iii) and convectively exponentially stable. 
In all cases, speed of the waves is given by
$$ c=\frac{H_L+\sqrt{H_LH_R}+H_R}{\sqrt{H_L}+\sqrt{H_R}}=1.3811\ldots.$$ 

For our numerical experiments, we use a perturbed dambreak initial data given by 
\begin{equation}
\label{numIC1}
\left\{
\begin{split}
h_0(x)&=\mathbbm{1}_{x\leq 0}+0.7\times\mathbbm{1}_{0<x}+\mathbbm{1}_{(x+5)^2<0.5}e^{-\frac{1}{0.5-(x+5)^2}},\\
q_0(x)&=\mathbbm{1}_{x\leq 0}+0.7^{3/2}\times\mathbbm{1}_{0<x},
\end{split}
\right. \quad x\in\R.
\end{equation}
See Figure~\ref{initial_data} left panel for a plot of \eqref{numIC1}.
\paragraph{{\bf Convectively stable regime}} For $F=2.28\in\pazocal{R}_{\mathrm{conv}}$, we present in Figure~\ref{nonmonotone_stable} several snapshots at time $100$, $500$, and $2000$ of the fluid height $h$ in the comoving frame $c$ showing convergence to a nonmonotone hydraulic shock in the large-time asymptotic limit. We also refer to Figure~\ref{space-time} first panel for a (comoving) space-time plot of the fluid height $h$. The latter space-time plot clearly shows the speeds of shocks are faster than the comoving frame speed and shocks gradually merge into a single subshock of the nomonotone hydraulic profile at $t\approx 1000$. This numerical experiment illustrates that our analytically derived convective stability condition predicts  the asymptotic response to large-scale localized perturbations: convergence to a nonmonotone hydraulic shock in our case.

\begin{figure}[htbp]
\begin{center}
\includegraphics[scale=0.4]{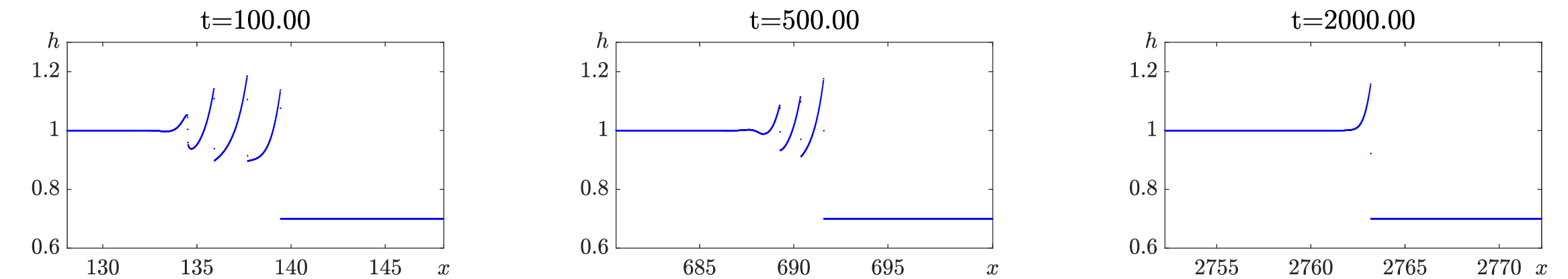}
\end{center}
\caption{Numerical simulation of \eqref{sv} with $F=2.28$ and initial data \eqref{numIC1}.}
\label{nonmonotone_stable}
\end{figure}

\paragraph{{\bf Convectively unstable regime}} On the other hand, for $F=2.30\in\pazocal{R}_{\mathrm{abs}}$ in the convectively unstable regime, we make a simulation with the former initial data, showing an
“invading front” connecting roll wave patterns on the left to a constant state on the right. See Figure~\ref{nonmonotone_unstable} for plots at time $100$, $500$, and $2000$ of fluid heights in the comoving frame $c$ and Figure~\ref{space-time} second panel for a (comoving) space-time plot. The latter space-time plot clearly shows that although the speeds of shocks are faster than that of the comoving frame, the location where new shocks emerge (marked by a dash line in Figure~\ref{space-time} second panel) moves at a slower speed than the comoving frame speed, resulting in more and more shocks between the location where new shocks emerge and the last shock connecting to $H_R$. 

Note that the slower speed of the invading front can be heuristically predicted by tracking how the absolute spectrum associated with $H_L$ depends on the speed of the moving frame in which it is computed. To be more concrete, we revisit computations from Section~\ref{s:consist} by allowing the speed of the moving frame $s$ to vary (instead of being fixed to $c$, the wave speed of the traveling wave of interest) and correspondingly mark with $s$ different quantities introduced there. We are interested in the absolute spectrum of $H_L$ at speed $s$, that is in the $\lambda$ such that the real parts of the spatial eigenvalues of $G_{H_L,s}(\lambda)$ coincide, $\Re(\gamma_{-,H_L,s}(\lambda))=\Re(\gamma_{+,H_L,s}(\lambda))$. Following \cite{FHSS}, we may define a so-called absolute spreading speed $s_\mathrm{abs}$ as the infimum of wave speeds $s$ for which the absolute spectrum remains unstable in the moving frame $s$. Computations from Section~\ref{s:consist} yield
\[
s_{abs}=\sqrt{H_L}\left(1+\frac{2}{F^2}\right).
\]
In our case, with $F=2.30$, $H_L=1$ and $H_R=0.7$, we have $s_\mathrm{abs}-c=-0.00305\ldots$. The dashed line in the second panel of Figure~\ref{space-time} has precisely a slope given by $s_\mathrm{abs}-c$ which matches the onset of the invading roll waves quite accurately.

\begin{figure}[htbp]
\begin{center}
\includegraphics[scale=0.4]{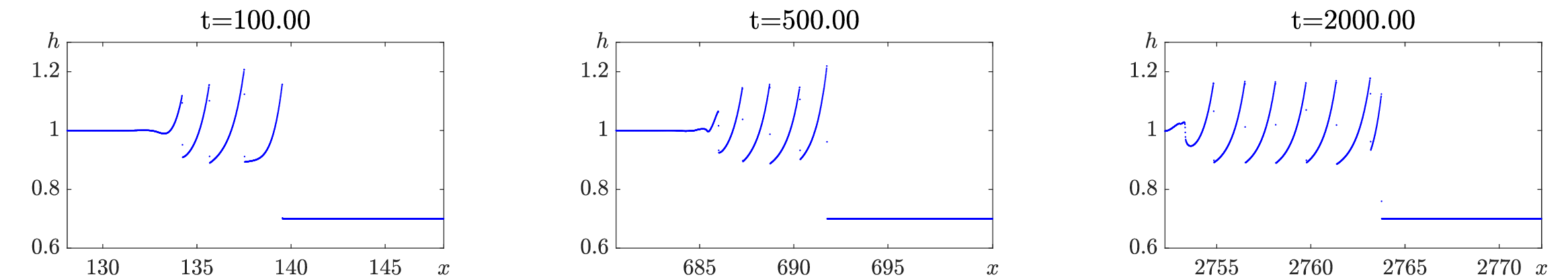}
\end{center}
\caption{Numerical simulation of \eqref{sv} with $F=2.30$ and initial data \eqref{numIC1}.}
\label{nonmonotone_unstable}
\end{figure}

\paragraph{{\bf Riemann case}} Finally, for $F=\sqrt{85}/10+\sqrt{238}/14$ in domain (iii) of Riemann profiles which lies within the convectively stable regime $\pazocal{R}_{\mathrm{conv}}$, we simulate \eqref{sv} with the initial data \eqref{numIC1}, showing convergence to the unperturbed Riemann shock in the large-time asymptotic limit. See Figure~\ref{Riemann} for plots at time $10$, $50$, and $100$ of fluid heights in the comoving frame $c$ and Figure~\ref{space-time} third panel for a (comoving) space-time plot. Both plots show emergence of a single shock caused by the initial perturbation which quickly merges into the Riemann shock.

\begin{figure}[htbp]
\begin{center}
\includegraphics[scale=0.4]{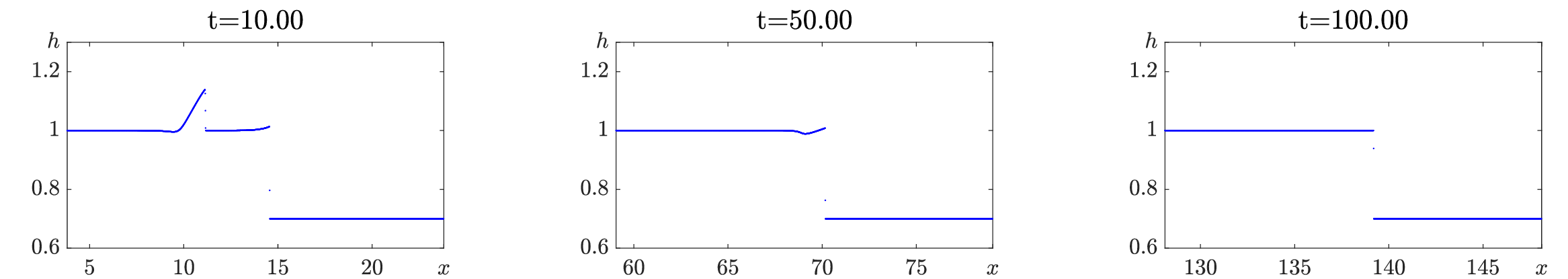}
\end{center}
\caption{Numerical simulation of \eqref{sv} with $F=\frac{\sqrt{85}}{10}+\frac{\sqrt{238}}{14}$ and initial data \eqref{numIC1}.}
\label{Riemann}
\end{figure}

\subsection{Increasing smooth waves}

Finally, we make $H_R>H_L$ to test if the corresponding increasing smooth ``reverse shock'' can be the large-time asymptotic limit. We fix $H_L=1$ and $H_R=1.3$ such that profiles are of increasing smooth type (i) if 
$$
F>\sqrt{\frac{13}{10}} + \frac{13}{10}=2.44017\ldots.
$$
For $F=3\in\pazocal{R}_{\mathrm{abs}}$ in domain (i) of increasing smooth profiles, we simulate with dambreak initial data given by 
\begin{equation}
\label{numIC2}
\left\{
\begin{split}
h_0(x)&=\mathbbm{1}_{x\leq 0}+1.3\times\mathbbm{1}_{0<x}+\mathbbm{1}_{(x+5)^2<0.5}e^{-\frac{1}{0.5-(x+5)^2}},\\
q_0(x)&=\mathbbm{1}_{x\leq 0}+1.3^{3/2}\times\mathbbm{1}_{0<x},
\end{split}
\right. \quad x\in\R.
\end{equation}
See Figure~\ref{initial_data} right panel for a plot of \eqref{numIC2}.
We report an “invading back” connecting roll wave patterns on the left to a constant state on the right. See Figure~\ref{increasing} for plots at time $5$, $25$, and $100$ of fluid heights in the comoving frame $c=\frac{H_L+\sqrt{H_LH_R}+H_R}{\sqrt{H_L}+\sqrt{H_R}}\sim1.6074$ and Figure~\ref{space-time} last panel for a space-time plot. This illustrates once again that our (local) stability conditions indeed successfully predict large-scale asymptotic behavior.

We also tested the predictive feature of the absolute spreading speed introduced in the convectively unstable case beyond its expected range of validity by computing $s_\mathrm{abs}$ in the present case and found $s_\mathrm{abs}-c=-0.3852\ldots$. Quite surprisingly and remarkably,  this predicted speed compares well with the speed of the primary invading front (see the dashed line in the forth panel of Figure~\ref{space-time}) for short time.


\begin{figure}[htbp]
\begin{center}
\includegraphics[scale=0.4]{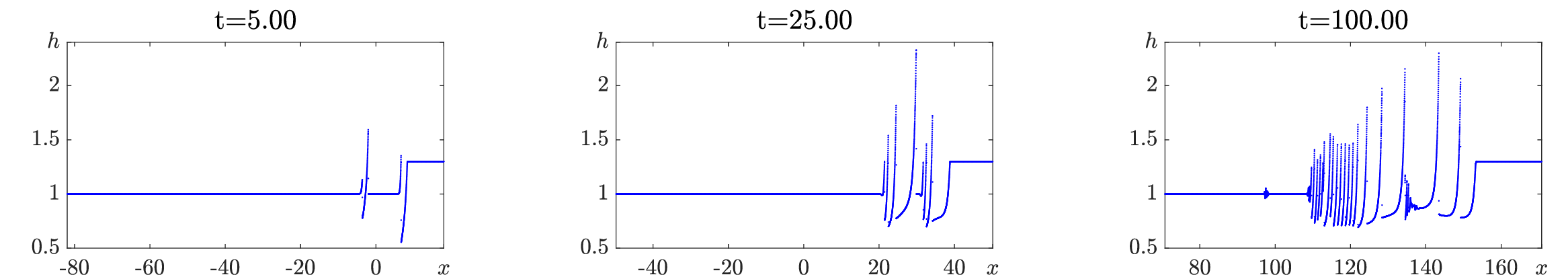}
\end{center}
\caption{Numerical simulation of \eqref{sv} with $F=3$ and initial data \eqref{numIC2}.}
\label{increasing}
\end{figure}

\paragraph{{\bf Roll wave selection}}

So far we are lacking even a heuristic argument to predict which roll wave is selected in the invading front pattern. Let us recall that, even when the translational invariance is quotiented, roll waves form a two-parameter family. 

By integrating over a large space-time box the conservation law of (SV), one may derive formally a constraint equation 
\[
q_{0,shock} = q_{0,{roll}}+ (c_{roll}-c_{shock})\langle H_{roll}\rangle,
\]
where $\langle A\rangle$ denotes the average of the quantity $A$ over one period of the roll wave pattern. Yet this is one equation short to fully identify the roll pattern, leaving a degree of freedom still to be determined.

Looking toward the future, we would like to add two more comments on this question. First, we point out that we have estimated numerically the wave period and wave speed of the observed roll pattern  (as approximately $1$ and $1.4$ respectively) and checked that the wave does lie in the stability region of the roll-wave stability diagram \cite[Fig. 3(c)]{JNRYZ}.

Second, we mention that the oscillatory instability pattern between $H_L$ and the roll-wave seems to be expanding linearly in time, preventing a direct connection from the roll-wave to $H_L$. One can not exclude that the identification of the missing roll parameter requires a deep understanding of this pattern in a way reminiscent of the resolution of the Gurevich--Pitaevskii problem through the analysis of dispersive shocks \cite{BMR-DSW}.

A complete, rigorous treatment of this bifurcation would be very interesting to carry out.



\bibliographystyle{alphaabbr}
\bibliography{Ref-roll}

\newcommand{\etalchar}[1]{$^{#1}$}
\begin{thebibliography}{BJN{\etalchar{+}}17b}

\bibitem[ABHN11]{ABHN_Laplace}
W.~Arendt, C.~J.~K. Batty, M.~Hieber, and F.~Neubrander.
\newblock {\em Vector-valued {L}aplace transforms and {C}auchy problems},
  volume~96 of {\em Monographs in Mathematics}.
\newblock Birkh\"{a}user/Springer Basel AG, Basel, second edition, 2011.

\bibitem[BJN{\etalchar{+}}17a]{BJNRZ-meca}
B.~Barker, M.~A. Johnson, P.~Noble, L.~M. Rodrigues, and K.~Zumbrun.
\newblock Note on the stability of viscous roll-waves.
\newblock {\em Comptes Rendus M\'ecanique}, 345(2):125--129, 2017.

\bibitem[BJN{\etalchar{+}}17b]{BJNRZ}
B.~Barker, M.~A. Johnson, P.~Noble, L.~M. Rodrigues, and K.~Zumbrun.
\newblock Stability of viscous {S}t. {V}enant roll waves: from onset to
  infinite {F}roude number limit.
\newblock {\em J. Nonlinear Sci.}, 27(1):285--342, 2017.

\bibitem[BJRZ11]{BJRZ}
B.~Barker, M.~A. Johnson, L.~M. Rodrigues, and K.~Zumbrun.
\newblock Metastability of solitary roll wave solutions of the {S}t. {V}enant
  equations with viscosity.
\newblock {\em Phys. D}, 240(16):1289--1310, 2011.

\bibitem[BGMR21]{BMR-DSW}
S.~Benzoni-Gavage, C.~Mietka, and L.~M. Rodrigues.
\newblock Modulated equations of {H}amiltonian {PDE}s and dispersive shocks.
\newblock {\em Nonlinearity}, 34(1):578--641, 2021.

\bibitem[BGS07]{BenzoniGavage-Serre_multiD_hyperbolic_PDEs}
S.~Benzoni-Gavage and D.~Serre.
\newblock {\em Multidimensional hyperbolic partial differential equations}.
\newblock Oxford Mathematical Monographs. The Clarendon Press, Oxford
  University Press, Oxford, 2007.
\newblock First-order systems and applications.

\bibitem[BRar]{Blochas-Rodrigues}
P.~Blochas and L.~M. Rodrigues.
\newblock Uniform asymptotic stability for convection-reaction-diffusion
  equations in the inviscid limit towards {R}iemann shocks.
\newblock {\em Ann. Inst. H. Poincar\'{e} C Anal. Non Lin\'{e}aire}, to appear.

\bibitem[Bre00]{Bre}
A.~Bressan.
\newblock {\em Hyperbolic systems of conservation laws}, volume~20 of {\em
  Oxford Lecture Series in Mathematics and its Applications}.
\newblock Oxford University Press, Oxford, 2000.
\newblock The one-dimensional Cauchy problem.

\bibitem[{Cla}17]{C2}
{Clawpack Development Team}.
\newblock Clawpack version 5.4.0, 2017.

\bibitem[Dre49]{Dr}
R.~F. Dressler.
\newblock Mathematical solution of the problem of roll-waves in inclined open
  channels.
\newblock {\em Comm. Pure Appl. Math.}, 2:149--194, 1949.

\bibitem[DR20]{DR1}
V.~Duch\^{e}ne and L.~M. Rodrigues.
\newblock Large-time asymptotic stability of {R}iemann shocks of scalar balance
  laws.
\newblock {\em SIAM J. Math. Anal.}, 52(1):792--820, 2020.

\bibitem[DR22]{DR2}
V.~Duch\^{e}ne and L.~M. Rodrigues.
\newblock Stability and instability in scalar balance laws: fronts and periodic
  waves.
\newblock {\em Anal. PDE}, 15(7):1807--1859, 2022.

\bibitem[FHSS22]{FHSS}
G.~Faye, M.~Holzer, A.~Scheel, and L.~Siemer.
\newblock Invasion into remnant instability: a case study of front dynamics.
\newblock {\em Indiana Univ. Math. J.}, 71(5):1819--1896, 2022.

\bibitem[FRar]{FR1}
G.~Faye and L.~M. Rodrigues.
\newblock Exponential asymptotic stability of {R}iemann shocks of hyperbolic
  systems of balance laws.
\newblock {\em SIAM J. Math. Anal.}, to appear.

\bibitem[FRss]{FR2}
G.~Faye and L.~M. Rodrigues.
\newblock Convective stability and exponential asymptotic stability of non
  constant shocks of hyperbolic systems of balance laws.
\newblock Work in progress.

\bibitem[GRss]{GR}
L.~Gar{\'e}naux and L.~M. Rodrigues.
\newblock Convective stability in scalar balance laws.
\newblock Work in progress.

\bibitem[God01]{Godillon}
P.~Godillon.
\newblock Linear stability of shock profiles for systems of conservation laws
  with semi-linear relaxation.
\newblock {\em Phys. D}, 148(3-4):289--316, 2001.

\bibitem[GL03]{Godillon-Lorin}
P.~Godillon and E.~Lorin.
\newblock A {L}ax shock profile satisfying a sufficient condition of spectral
  instability.
\newblock {\em J. Math. Anal. Appl.}, 283(1):12--24, 2003.

\bibitem[Hen81]{He}
D.~Henry.
\newblock {\em Geometric theory of semilinear parabolic equations}, volume 840
  of {\em Lecture Notes in Mathematics}.
\newblock Springer-Verlag, Berlin-New York, 1981.

\bibitem[Jef25]{Je}
H.~Jeffreys.
\newblock The flow of water in an inclined channel of rectangular section.
\newblock {\em Phil. Mag.}, 49(293):793--807, 1925.

\bibitem[JNR{\etalchar{+}}19]{JNRYZ}
M.~A. Johnson, P.~Noble, L.~M. Rodrigues, Z.~Yang, and K.~Zumbrun.
\newblock Spectral stability of inviscid roll waves.
\newblock {\em Comm. Math. Phys.}, 367(1):265--316, 2019.

\bibitem[KP13]{KapitulaPromislow-stability}
T.~Kapitula and K.~Promislow.
\newblock {\em Spectral and dynamical stability of nonlinear waves}, volume 185
  of {\em Applied Mathematical Sciences}.
\newblock Springer, New York, 2013.
\newblock With a foreword by Christopher K. R. T. Jones.

\bibitem[Kaw83]{Kaw}
S.~Kawashima.
\newblock {\em Systems of a hyperbolic-parabolic composite type, with
  applications to the equations of magnetohydrodynamics}.
\newblock PhD thesis, Kyoto University, 1983.

\bibitem[Liu87]{L}
T.-P. Liu.
\newblock Hyperbolic conservation laws with relaxation.
\newblock {\em Comm. Math. Phys.}, 108(1):153--175, 1987.

\bibitem[MAB{\etalchar{+}}16]{C1}
K.~Mandli, A.~Ahmadia, M.~Berger, D.~Calhoun, D.~George, Y.~Hadjimichael,
  D.~Ketcheson, G.~Lemoine, and R.~LeVeque.
\newblock Clawpack: building an open source ecosystem for solving hyperbolic
  pdes.
\newblock {\em PeerJ Computer Science}, 2:e68, 2016.

\bibitem[MZ02]{MZ}
C.~Mascia and K.~Zumbrun.
\newblock Pointwise {G}reen's function bounds and stability of relaxation
  shocks.
\newblock {\em Indiana Univ. Math. J.}, 51(4):773--904, 2002.

\bibitem[MZ05]{MaZ}
C.~Mascia and K.~Zumbrun.
\newblock Stability of large-amplitude shock profiles of general relaxation
  systems.
\newblock {\em SIAM J. Math. Anal.}, 37(3):889--913, 2005.

\bibitem[M{\'{e}}t01]{Me}
G.~M{\'{e}}tivier.
\newblock Stability of multidimensional shocks.
\newblock In {\em Advances in the theory of shock waves}, volume~47 of {\em
  Progr. Nonlinear Differential Equations Appl.}, pages 25--103. Birkh\"{a}user
  Boston, Boston, MA, 2001.

\bibitem[Rod13]{R_HDR}
L.~M. Rodrigues.
\newblock {\em Asymptotic stability and modulation of periodic wavetrains.
  General theory \& applications to thin film flows}.
\newblock Habilitation {\`a} diriger des recherches, Universit\'e Lyon 1, 2013.

\bibitem[RYZ23]{RYZ1}
L.~M. Rodrigues, Z.~Yang, and K.~Zumbrun.
\newblock Convective-wave solutions of the {R}ichard-{G}avrilyuk model for
  inclined shallow-water flow.
\newblock {\em Water Waves}, 5(1):1--39, 2023.

\bibitem[RYZss]{RYZ2}
L.~M. Rodrigues, Z.~Yang, and K.~Zumbrun.
\newblock Existence and stability of hydraulic shock profiles for the
  {R}ichard-{G}avrilyuk model.
\newblock Work in progress.

\bibitem[RZ16]{RZ}
L.~M. Rodrigues and K.~Zumbrun.
\newblock Periodic-coefficient damping estimates, and stability of
  large-amplitude roll waves in inclined thin film flow.
\newblock {\em SIAM J. Math. Anal.}, 48(1):268--280, 2016.

\bibitem[San02]{Sandstede}
B.~Sandstede.
\newblock Stability of travelling waves.
\newblock In {\em Handbook of dynamical systems, {V}ol. 2}, pages 983--1055.
  North-Holland, Amsterdam, 2002.

\bibitem[SS01]{SS}
B.~Sandstede and A.~Scheel.
\newblock Essential instabilities of fronts: bifurcation, and bifurcation
  failure.
\newblock {\em Dynamical Systems}, 16:1 -- 28, 2001.

\bibitem[Sat76]{Sa}
D.~Sattinger.
\newblock On the stability of waves of nonlinear parabolic systems.
\newblock {\em Advances in Mathematics}, 22(3):312--355, 1976.

\bibitem[SK85]{SK}
Y.~Shizuta and S.~Kawashima.
\newblock Systems of equations of hyperbolic-parabolic type with applications
  to the discrete {B}oltzmann equation.
\newblock {\em Hokkaido Math. J.}, 14(2):249--275, 1985.

\bibitem[SYZ20]{SYZ}
A.~Sukhtayev, Z.~Yang, and K.~Zumbrun.
\newblock Spectral stability of hydraulic shock profiles.
\newblock {\em Phys. D}, 405:132360, 9, 2020.

\bibitem[TZ15]{Texier-Zumbrun}
B.~Texier and K.~Zumbrun.
\newblock Entropy criteria and stability of extreme shocks: a remark on a paper
  of {L}eger and {V}asseur.
\newblock {\em Proc. Amer. Math. Soc.}, 143(2):749--754, 2015.

\bibitem[YZ20]{YZ}
Z.~Yang and K.~Zumbrun.
\newblock Stability of {H}ydraulic {S}hock {P}rofiles.
\newblock {\em Arch. Ration. Mech. Anal.}, 235(1):195--285, 2020.

\bibitem[Zum01]{Z-course}
K.~Zumbrun.
\newblock Multidimensional stability of planar viscous shock waves.
\newblock In {\em Advances in the theory of shock waves}, volume~47 of {\em
  Progr. Nonlinear Differential Equations Appl.}, pages 307--516. Birkh\"auser
  Boston, Boston, MA, 2001.

\bibitem[ZH98]{ZH}
K.~Zumbrun and P.~Howard.
\newblock Pointwise semigroup methods and stability of viscous shock waves.
\newblock {\em Indiana Univ. Math. J.}, 47(3):741--871, 1998.

\bibitem[ZJL05]{Z}
K.~Zumbrun, H.~K. Jenssen, and G.~Lyng.
\newblock Chapter 5 - stability of large-amplitude shock waves of compressible
  {N}avier–{S}tokes equations.
\newblock volume~3 of {\em Handbook of Mathematical Fluid Dynamics}, pages
  311--533. North-Holland, 2005.

\end{thebibliography}

\end{document}